\theoremstyle{plain}
\newtheorem{thm}{Theorem}[section]
\newtheorem{lem}[thm]{Lemma}
\theoremstyle{definition}
\newtheorem*{defn}{Definition}
\newcommand{\TL}{{\mathcal{TL}}}
\newcommand{\id}{\mathrm{id}}
\newcommand{\co}{\colon\thinspace}
\title{Skein theory for the $ADE$ planar algebras}
\author{Stephen Bigelow}
\address{Department of Mathematics,
         University of California at Santa Barbara,
         California 93106, USA}
\email{bigelow@math.ucsb.edu}
\date{January 2009}
\begin{document}

\begin{abstract}
We give generators and relations for the planar algebras
corresponding to $ADE$ subfactors.
We also give a basis 
and an algorithm to express an arbitrary diagram
as a linear combination of these basis diagrams.
\end{abstract}

\maketitle

\section{Introduction}

The notion of a planar algebra is due to Jones \cite{jonesplanar}.
The roughly equivalent notion of a spider
is due to Kuperberg \cite{kuperberg}.
Planar algebras arise in many contexts
where there is a reasonably nice category
with tensor products and duals.
Examples are the category of representations of a quantum group,
or of bimodules coming from a subfactor.

The subfactor algebras of index less than $4$ can be classified into
the two infinite families $A_N$ and $D_{2N}$,
and the two sporadic examples $E_6$ and $E_8$.
See
\cite{ghj},
\cite{ocneanu},
\cite{izumi},
and
\cite{ko}
for the story of this ``ADE'' classification.

The {\em Kuperberg program} can be summarized as follows.
    \begin{quote}
    Give a presentation for every interesting planar algebra,
    and prove as much as possible about the planar algebra
    using only its presentation.
    \end{quote}
The planar algebras corresponding to subfactors of type $A_N$
are fairly well understood.
In \cite{mps},
Morrison, Peters and Snyder basically complete the Kuperberg program
in the $D_{2N}$ case.

The aim of this paper
is to extend the results of \cite{mps}
types $E_6$ and $E_8$.
However our approach is different.
Whereas \cite{mps}
use only combinatorial arguments starting from their presentation,
we will rely on the existence of the subfactor planar algebra
and some of its known properties.
Thus this paper is not completely in the spirit of the Kuperberg program.
As a compensation,
we will address the following program,
suggested by Jones in Appendix~B of \cite{jonesannular}.
    \begin{quote}
    Give a basis for every interesting planar algebra,
    and an algorithm to express any given diagram
    as a linear combination of basis elements.
    \end{quote}

Most of this paper concerns the subfactor of type $E_8$.
In Section~\ref{sec:Eeight} we define two planar algebras.
The planar algebra $\mathcal{P}$ is defined
by a presentation in terms of one generator and five relations.
The planar algebra $\mathcal{P}'$ is
the subfactor planar algebra whose principal graph is the $E_8$ graph.
We give the properties of $\mathcal{P}'$ that we need.
In Section~\ref{sec:isomorphism}
we prove that $\mathcal{P}$ is isomorphic to $\mathcal{P}'$.
In Section~\ref{sec:basis}
define a set of diagrams that will form a basis for our planar algebra.
The proof that the basis spans is constructive,
although we have not tried to give an efficient algorithm.
Finally,
in Section~\ref{sec:ade},
we explain how our methods can be applied to types $E_6$,
$A_N$ and $D_{2N}$.

\section{Planar algebras}

We give a brief and impressionistic review of
the definition of a planar algebra.
For the details,
see the preprint \cite{jonesplanar} at Vaughan Jones' website.

Something that is called an ``algebra''
is usually a vector space
together with one or more additional operations.
A planar algebra $\mathcal{P}$ consists of infinitely many vector spaces
(or one graded vector space if you prefer),
together with infinitely many operations.
For every non-negative integer $k$,
we have a vector space $\mathcal{P}_k$.
For every planar arc diagram $T$,
we have a multilinear $n$-ary operation
$$\mathcal{P}(T) \co 
  \mathcal{P}_{k_1} \otimes \dots \otimes \mathcal{P}_{k_n} \to
  \mathcal{P}_{k_0},$$
where $n$ is the number of internal disks in $T$,
$k_1,\dots,k_n$ are the numbers of 
ends of strands on the internal disks of $T$,
and $k_0$ is the number of ends of strands on the external disk.


In practice,
$\mathcal{P}_k$ will be spanned by some kind of diagrams in a disk.
Each diagram in $\mathcal{P}_k$ has $k$ endpoints of strands on its boundary.
The action of a planar arc diagram $T$
is given by gluing diagrams into the interior disks of $T$,
matching up endpoints of strands on the boundary of the diagrams
with the endpoints of strands in $T$.
To determine ``which way up'' to glue the diagrams,
we use basepoints on the boundaries of diagrams
and on the boundaries of input disks of $T$.
These basepoints are indicated by a star,
and are never allowed to coincide with the endpoints of strands.

Note that,
for ease of exposition,
we only work with ``unshaded'' planar algebras.

\subsection{Composition}

It is often convenient to draw an element of a planar algebra
in a rectangle instead of a round disk.
When we do this,
the basepoint will always be at the top left corner,
and the endpoints of strands will be on the top and bottom edges.

Let $\mathcal{P}^a_b$ denote the elements of $\mathcal{P}_{a+b}$,
drawn in a rectangle,
with $a$ endpoints at the top
and $b$ at the bottom.
If $A \in \mathcal{P}^a_b$
and $B \in \mathcal{P}^b_c$
then the {\em composition} of $A$ and $B$
is the element $AB \in \mathcal{P}^a_c$
obtained by stacking $A$ on top of $B$.
Note that the meaning of this composition depends on the value of $b$,
which must be made clear from context.
(Here,
we are blurring the distinction between the planar algebra
and the corresponding category,
as defined in \cite{mps}.)

\subsection{Quantum integers}

Suppose $q$ is a non-zero complex number.
The {\em quantum integer} $[n]$ is given by
$$[n] = \frac{q^n - q^{-n}}{q - q^{-1}}.$$
These appear only briefly in this paper,
and they can be treated as constants
whose precise value is unimportant.
However they play an important role behind the scenes,
for example in the definition of the Jones-Wenzl idempotents,
and in the properties of the subfactor planar algebra.

\subsection{Temperley-Lieb planar algebra}

A {\em Temperley-Lieb diagram} is
a finite collection of disjoint properly embedded edges in a disk,
together with a basepoint on the boundary.
These form a planar algebra as follows.
Suppose $T$ is a planar arc diagram with $n$ holes,
and $D_1,\dots,D_n$
are Temperley-Lieb diagrams with the appropriate number of endpoints.
We can create a new Temperley-Lieb diagram $A$
by inserting $D_1,\dots,D_n$ into the holes in $T$,
and deleting any resulting strands that form closed loops.
Let $m$ be the number of closed loops that were deleted.
Then $T$ maps the $n$-tuple $D_1,\dots,D_n$ to $[2]^m A$.

The planar algebra of Temperley-Lieb diagrams
is called the {\em Temperley-Lieb planar algebra},
and will be denoted $\TL$.
It can be defined more briefly
as the planar algebra with no generators
and a single defining relation
$$\raisebox{-10pt}{\includegraphics[scale=0.6]{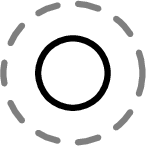}} = [2]  \,
  \raisebox{-10pt}{\includegraphics[scale=0.6]{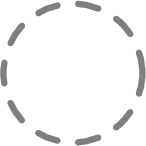}}.$$
Every planar algebra we consider will satisfy the above relation,
and hence contain an image of $\TL$.

We now list some important examples of Temperley-Lieb diagrams.
The {\em identity diagram} $\id_n \in \TL^n_n$
is the diagram consisting of $n$ vertical strands in a rectangle.

Suppose $D \in \TL^m_n$ is a Temperley-Lieb diagram drawn in a rectangle.
We will say $D$ {\em contains a cup}
if it contains a strand
that has both endpoints on the top edge of the rectangle.
We say $D$ {\em is} a cup
if it is consists of $n$ vertical strands
and one strand that has both endpoints on the top of the rectangle.

Similarly,
a {\em cap} is a diagram in $\TL^n_{n+2}$
that has $n$ vertical strands
and one strand with both endpoints on the bottom of the rectangle.

The {\em Jones-Wenzl idempotent} $P_n$
is unique element of $\TL^n_n$ with the following properties.
\begin{itemize}
\item When $P_n$ is expressed as
      a linear combination of Temperley-Lieb diagrams,
      the diagram $\id_n$ occurs with coefficient $1$.
\item If $X \in \TL^{n-2}_n$ is any cap then $X P_n = 0$.
\item If $Y \in \TL^n_{n-2}$ is any cup then $P_n Y = 0$.
\end{itemize}
In all of our examples,
$q$ will be of the form $e^{i \pi / N}$.
In this case the element $P_n$ exists and is unique for all $n \le N-1$,
but does not exist for $n \ge N$.

Let a {\em crossing} be the following element of $\mathcal{P}_4$.
$$
     \labellist
       \tiny \hair 1pt
       \pinlabel $\star$ [l] at 0 50
     \endlabellist
     \raisebox{-20pt}{\includegraphics[scale=0.4]{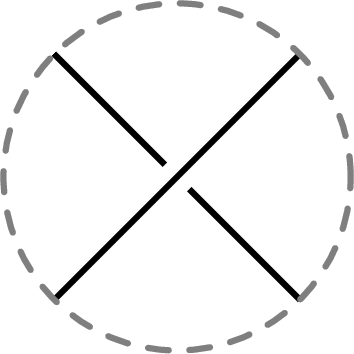}}
= iq^{\frac{1}{2}}
     \labellist
       \tiny \hair 1pt
       \pinlabel $\star$ [l] at 0 50
     \endlabellist
     \raisebox{-20pt}{\includegraphics[scale=0.4]{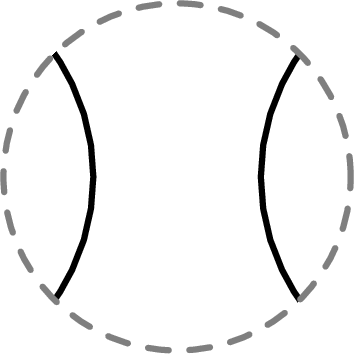}}
- iq^{-\frac{1}{2}}
     \labellist
       \tiny \hair 1pt
       \pinlabel $\star$ [l] at 0 50
     \endlabellist
     \raisebox{-20pt}{\includegraphics[scale=0.4]{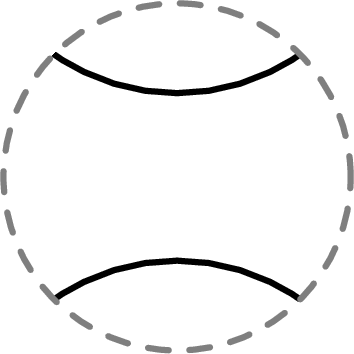}}
$$

The crossing allows us to consider knot and tangle diagrams
as representing elements of the Temperley-Lieb planar algebra.
We can express a diagram with $k$ crossings
as a linear combination of $2^k$ diagrams that have no crossings.
This process is called
{\em resolving the crossings}.

The crossing satisfies Reidemeister moves two and three.
In place of Reidemeister one,
we have the following.
$$
\raisebox{-20pt}{\includegraphics[scale=0.4]{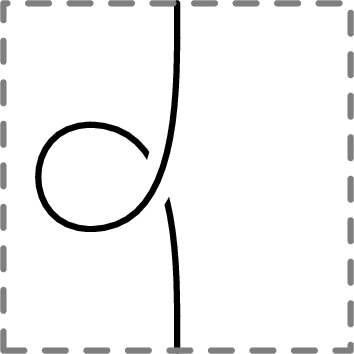}} = iq^{\frac{3}{2}}
\raisebox{-20pt}{\includegraphics[scale=0.4]{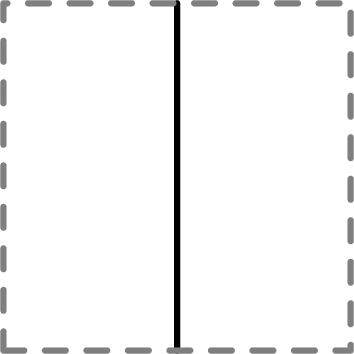}}.
$$

\section{The $E_8$ planar algebra}
\label{sec:Eeight}

The purpose of this section
is to define the planar algebras $\mathcal{P}$ and $\mathcal{P}'$.
The first is given by a presentation,
and the second is the subfactor planar algebra
whose principal graph is the $E_8$ graph.
In the next section,
we will show that $\mathcal{P}$ and $\mathcal{P}'$ are isomorphic.

\subsection{The presentation}

We define $\mathcal{P}$
in terms of generators and relations.
There is just one generator $S \in \mathcal{P}_{10}$.

Before we list the relations,
we define some notation.
Let $q = e^{i \pi / 30}$.
Let $\omega = e^{6 i \pi / 5}$.
Let $\rho(S)$,
$\tau(S)$,
$S^2$
and $\hat{S}$
be as shown in Figure~\ref{fig:rhoetc}.
We call $\rho(S)$ the {\em rotation} of $S$
and $\tau(S)$ the {\em partial trace} of $S$.
\begin{figure}[htb]
$$
     \labellist
     \tiny \hair 1pt
     \pinlabel $S$ at 50 50
     \pinlabel $\star$ [r] at 30 50
     \endlabellist
     \rho(S) = \raisebox{-18pt}{\includegraphics[scale=0.4]{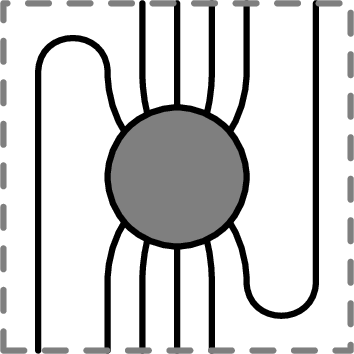}},
\quad
     \labellist
     \tiny \hair 1pt
     \pinlabel $S$ at 50 50
     \pinlabel $\star$ [r] at 30 50
     \endlabellist
     \tau(S) = \raisebox{-18pt}{\includegraphics[scale=0.4]{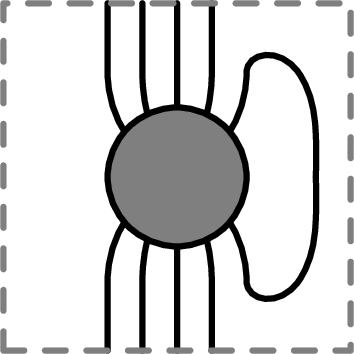}},
\quad
     \labellist
     \tiny \hair 1pt
     \pinlabel $S$ at 50 25
     \pinlabel $S$ at 50 75
     \pinlabel $\star$ [r] at 35 25
     \pinlabel $\star$ [r] at 35 75
     \endlabellist
     S^2 = \raisebox{-18pt}{\includegraphics[scale=0.4]{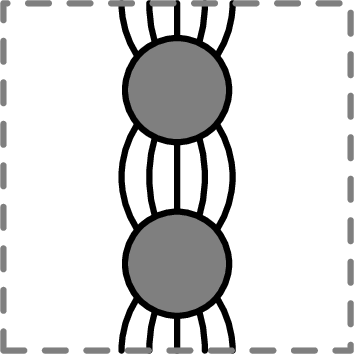}},
\quad
     \labellist
     \tiny \hair 1pt
     \pinlabel $S$ at 50 50
     \pinlabel $\star$ [b] at 50 70
     \endlabellist
     \hat{S} = \raisebox{-18pt}{\includegraphics[scale=0.4]{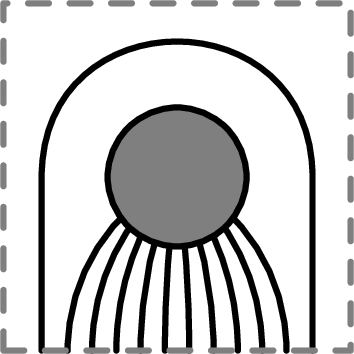}}
$$
\caption{The diagrams $\rho(S)$, $\tau(S)$, $S^2$ and $\hat{S}$, respectively}
\label{fig:rhoetc}
\end{figure}

The defining relations of $\mathcal{P}$ are as follows.
\begin{itemize}
\item $\raisebox{-6pt}{\includegraphics[scale=0.4]{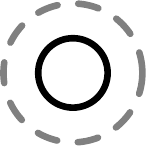}} = [2]  \,
       \raisebox{-6pt}{\includegraphics[scale=0.4]{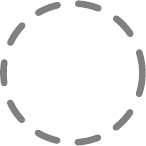}}$,
\item $\rho(S) = \omega S$,
\item $\tau(S) = 0$,
\item $S^2 = S + [2]^2[3] P_5$,
\item $\hat{S} \, P_{12} = 0$.
\end{itemize}

We call the first four relations
the {\em bubble bursting},
{\em chirality},
{\em partial trace},
and {\em quadratic} relation,
respectively.
The fifth relation is equivalent to the following
{\em braiding relation}.

\begin{lem}
\label{lem:braid}
$
     \labellist
     \tiny \hair 1pt
     \pinlabel $S$ at 50 50
     \pinlabel $\star$ [b] at 50 70
     \endlabellist
  \raisebox{-18pt}{\includegraphics[scale=0.4]{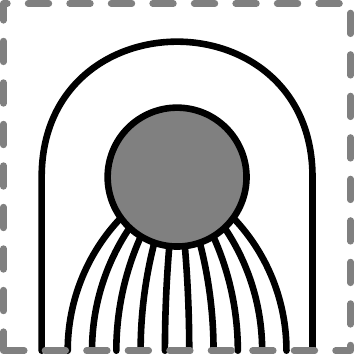}}
=
     \labellist
     \tiny \hair 1pt
     \pinlabel $S$ at 50 50
     \pinlabel $\star$ [b] at 50 70
     \endlabellist
  \raisebox{-18pt}{\includegraphics[scale=0.4]{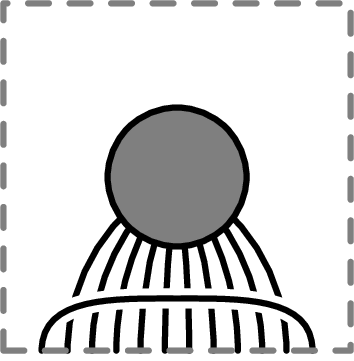}}.$
\end{lem}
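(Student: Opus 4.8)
The plan is to prove the equivalence in the presence of the four relations that hold by default --- bubble bursting, chirality, partial trace, and the quadratic relation --- by resolving every crossing through the skein relation defining the crossing, and then comparing the resulting linear combinations of Temperley-Lieb diagrams. The property of $P_{12}$ that I will lean on throughout is its characterization: coefficient $1$ on $\id_{12}$, and annihilation by every cap and cup. In particular, composing a diagram on $12$ strands with $P_{12}$ discards exactly the part supported on diagrams containing a turnback, so $\hat S P_{12} = 0$ is precisely the assertion that $\hat S$ lies in the span of turnback diagrams. Note that $P_{12}$ is available here, since $q = e^{i\pi/30}$ and $12 \le 29$.

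The cleanest target is to show that, modulo the four standing relations, the \emph{difference} of the two sides of the braiding relation equals a nonzero scalar multiple of $\hat S P_{12}$; the asserted equivalence is then immediate, since one relation says this difference vanishes and the other says $\hat S P_{12}$ vanishes. To establish this identity I would first resolve the crossings in which the extra strand passes over, respectively under, the strands of $S$. The all-through resolution is common to the over- and under-configurations, so it cancels in the difference, leaving a combination of diagrams in each of which the extra strand is tied to a strand of $S$ by a turnback. This is where the scalars $iq^{\pm 1/2}$ from the skein relation and the framing factor $iq^{3/2}$ substituting for Reidemeister one must be tracked carefully, and where Reidemeister moves two and three are used to put the partially resolved diagrams into a standard form.

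Next I would identify this turnback combination with $\hat S P_{12}$. The mechanism is that the two-strand crossing difference is a scalar multiple of $\id_2 - e$, where $e$ is the cap-cup, so sweeping the extra strand across all of $S$ manufactures precisely the turnback-annihilating structure that defines the Jones-Wenzl idempotent; the partial trace relation $\tau(S) = 0$ is what disposes of the terms in which the turnback caps two legs of $S$ back to each other. Matching the surviving terms against the expansion $P_{12} = \id_{12} - (\id_{12} - P_{12})$ then exhibits the difference as a multiple of $\hat S P_{12}$.

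I expect the main obstacle to be exactly this middle identification: verifying that the explicit turnback expansion produced by resolving the braiding coincides, coefficient by coefficient, with the intrinsic $P_{12}$, rather than with some other turnback-supported element. The delicate point is scalar bookkeeping --- getting the powers of $iq^{1/2}$ and the factors of $[2]$ from closed loops to cancel so that no spurious multiple of the through-term $\id_{12}$ survives --- since any leftover through-term would break the proportionality to $\hat S P_{12}$ and hence the equivalence.
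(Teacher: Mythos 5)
Your high-level target is the right one: the content of the lemma is indeed that, modulo the other four relations, the difference of the two sides equals $\hat S P_{12}$ (in fact with scalar exactly $1$, not just some nonzero scalar), so that the braiding relation and $\hat S P_{12}=0$ are interchangeable. But there are two genuine problems with your route. First, a misreading: the left-hand side $\hat S$ is a planar diagram (the strand passes around $S$ with no crossings at all), not an under-crossing configuration, so there is no ``all-through resolution common to the over- and under-configurations'' to cancel in the difference; the only crossings in the statement are on the right-hand side. Second, and more seriously, the step you yourself flag as the main obstacle --- identifying the turnback combination produced by resolving the crossings with the intrinsic $P_{12}$ ``coefficient by coefficient'' --- is not just delicate but ill-posed at this point in the development. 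The diagrams that survive the resolution (rotations of $S$ capped off by Temperley--Lieb turnbacks) are not known to be linearly independent here --- the basis theorem comes later and depends on this very lemma --- so a coefficient-by-coefficient comparison has no meaning, and in any case the explicit coefficients of $P_{12}$ (ratios of quantum integers) will not match monomials in $iq^{1/2}$ term by term.

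The paper's proof shows how to avoid expanding $P_{12}$ entirely: one only ever uses its characterizing property (coefficient $1$ on $\id_{12}$, every other term contains a cup). Writing $X$ for the right-hand side, one checks that $\hat S Y = XY$ for \emph{every} cup $Y \in \mathcal{P}^{12}_{10}$. Only two cups give something nonzero: for those, $\hat S Y = \rho(S) = \omega S$, while resolving $XY$ (one Reidemeister-one kink plus nine crossings, all but one term dying against the partial trace and chirality relations) gives $(iq^{3/2})(iq^{1/2})^9 S = -q^6 S$, and the consistency $\omega = -q^6$ for $q = e^{i\pi/30}$ is exactly the scalar check that makes the relation hold; all other cups annihilate both sides. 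Agreement on all cups, together with the cup-structure of $P_{12}$, gives $\hat S - X = (\hat S - X)P_{12} = \hat S P_{12}$, since $XP_{12} = 0$; hence $\hat S = X$ if and only if $\hat S P_{12} = 0$. If you replace your coefficient-matching step with this cup-evaluation argument, the rest of your outline (the role of the partial trace relation in killing self-caps, the scalar bookkeeping) falls into place.
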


\begin{proof}
Let $X$ denote the diagram
on the right hand side of the braiding relation.
We must show that $\hat{S} = X$.

Suppose $Y$ is the cup
$$ Y = \raisebox{-9pt}{\includegraphics[scale=0.4]{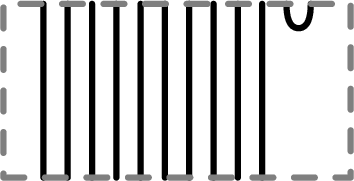}}.$$
Then
$$\hat{S} Y = \rho(S) = \omega S.$$
To compute $XY$,
first apply Reidemeister one to the rightmost crossing,
giving a factor of $iq^{3/2}$.
Now resolve the remaining nine crossings.
All but one of the resulting terms
contains a cup connected directly to $S$,
so can be eliminated.
The easiest way to see this is to resolve the crossings one by one,
working from right to left.
In the end,
we obtain
$$XY = (iq^{3/2})(iq^{1/2})^9 S = -q^6 S.$$
By comparing the above expressions,
we find that $\hat{S}Y = XY$.

Similarly,
if $Y$ is the cup
$$ Y = \raisebox{-9pt}{\includegraphics[scale=0.4]{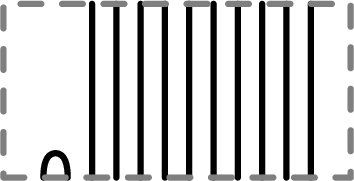}}.$$
then $\hat{S}Y = XY$.
If $Y$ is any other cup
then $\hat{S}Y$ and $XY$ are both zero.

Recall that $P_{12}$ is a linear combination of Temperley-Lieb diagrams.
Every term in this linear combination contains a cup,
except for the identity,
which occurs with coefficient one.
Thus
$$\hat{S} = X \; \Leftrightarrow \; \hat{S} P_{12} = X P_{12}.$$
But $X P_{12}$ is clearly zero.
Thus the desired result $\hat{S} = X$
is equivalent to the relation $\hat{S} P_{12} = 0$.
\end{proof}

The braiding relation
says that it is possible to pass a strand over the generator $S$.
However it is not possible to pass a strand under $S$,
so $\mathcal{P}$ is not a {\em braided planar algebra}.
It is not even possible to pass a strand under $S$ up to a change of sign,
so $\mathcal{P}$ is not {\em partially braided}
in the sense of \cite[Theorem~3.2]{mps}.
However the above braiding relation is enough for many purposes.


\subsection{The subfactor planar algebra}
\label{subsec:subfactor}

The $E_8$ graph is as follows.
$$\raisebox{-8pt}{\includegraphics[scale=1]{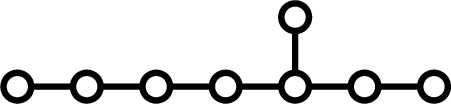}}$$
Let $\mathcal{P}'$ be
the subfactor planar algebra with principal graph $E_8$.
This exists and is unique,
up to some choices of convention.

We now give some known properties of $\mathcal{P}'$.
The proofs require some background knowledge
on subfactor planar algebras and principal graphs.
The meaning of these terms is explained in \cite{mps}
and \cite{jonesplanar}.

\begin{lem}
\label{lem:surjection}
$\mathcal{P}'$ is generated by an element $R \in \mathcal{P}'_{10}$.
Up to some choice of conventions,
$R$ satisfies the defining relations of $S$.
\end{lem}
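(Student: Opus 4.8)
The plan is to read everything off the principal graph $E_8$ together with the standard dictionary relating a subfactor planar algebra to its principal graph. First I would compute $\dim \mathcal{P}'_k$ for small $k$ by counting loops of length $k$ based at the distinguished vertex $*$ of $E_8$. Since $E_8$ agrees with the half-infinite line until it branches at its trivalent vertex, these counts coincide with $\dim \TL_k$ for every $k<10$, whereas at $k=10$ there is exactly one extra dimension, produced by the two vertices lying at depth $5$. I would take $R$ to be a generator of this one-dimensional complement of the image of $\TL$ in $\mathcal{P}'_{10}$, chosen to be a rotational eigenvector and a lowest-weight vector (killed by every cap). Its overall scale and phase remain free at this stage, and these are precisely the ``choices of conventions'' in the statement; they will be fixed by matching the relations below.

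Granting this choice of $R$, the formal relations fall out of the subfactor structure. The bubble-bursting relation is automatic, since a closed loop in any subfactor planar algebra evaluates to the index parameter $\delta = [2] = 2\cos(\pi/30)$, which is exactly the graph norm of $E_8$. The partial-trace relation $\tau(R)=0$ is immediate from lowest weight, because $\tau(R)$ lands in $\mathcal{P}'_8$ and a partial trace is a capping, which annihilates $R$. The chirality relation holds because rotation preserves the one-dimensional space of lowest-weight vectors at ten points, hence acts on it by a scalar; finiteness forces this scalar to be a root of unity, and for $E_8$ it is the known value $\omega = e^{6 i \pi/5}$, the orientation convention selecting it rather than its inverse. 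The quadratic relation I would handle by a dimension count: since $R$ is uncappable, so is $R^2\in\mathcal{P}'_{10}$ (a cap on the top or bottom meets an uncappable copy of $R$), and the uncappable part of $\mathcal{P}'_{10}$ is spanned by $R$ and the Jones--Wenzl idempotent $P_5$, so $R^2 = \alpha R + \beta P_5$. I would then pin down $\alpha=1$ and $\beta=[2]^2[3]$ by closing the diagram up in two different ways and comparing against the traces $\mathrm{tr}(R)=0$, $\mathrm{tr}(R^2)$, and $\mathrm{tr}(P_5)=[6]$ dictated by the principal graph.

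The two genuinely hard points are generation and the braiding relation, and I expect \emph{generation} to be the main obstacle. For it I would argue that $R$, together with the ambient Temperley--Lieb structure, produces every lowest-weight space of $\mathcal{P}'$ under the planar operations: because $E_8$ contributes only the single new vertex at depth $5$, there is exactly one new lowest-weight vector to account for, and the remainder of the standard invariant must be built from it. Making this precise is where one must invoke the finiteness and connectedness of the $E_8$ bimodule category rather than mere diagram combinatorics, so I would cite the relevant subfactor theory here. For the braiding relation, which by Lemma~\ref{lem:braid} is equivalent to $\hat R\, P_{12}=0$, I would verify directly that a Temperley--Lieb strand can be passed over $R$ in $\mathcal{P}'$ — a one-sided braiding inherited from the way the $E_8$ planar algebra is constructed over the quantum $SU(2)$ (Temperley--Lieb) planar algebra at $q=e^{i\pi/30}$ — and then conclude $\hat R\, P_{12}=0$ via Lemma~\ref{lem:braid}. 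This last relation, like generation, is the part that leans essentially on the external construction of $\mathcal{P}'$ rather than on the skein-theoretic manipulations used elsewhere in the paper.
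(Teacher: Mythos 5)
Your overall strategy matches the paper's: take $R$ to span the one-dimensional space of morphisms from $P_{10}$ to the empty diagram (your loop count $5^2+4^2+1^2+1^2 = 43 = C_5+1$ is the right justification), get bubble bursting from the graph norm, partial trace for free, chirality as a rotation eigenvalue fixed by convention (the paper cites \cite[Theorem~4.2.13]{jonesplanar}, where the invariant computed is $\omega^2$, with the sign of $\omega$ then chosen to make the braiding work), and defer generation to subfactor theory, exactly as the paper does. But two of your steps have genuine problems. First, the quadratic relation: your claim that ``the uncappable part of $\mathcal{P}'_{10}$ is spanned by $R$ and $P_5$'' is false as stated --- $P_5$, viewed in $\mathcal{P}'_{10}$, has nonzero partial trace $\tau(P_5) = ([6]/[5])P_4$, and so does $R^2$; the uncappable part is one-dimensional, spanned by $R$ alone. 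The correct statement is that $R^2$ is killed by caps on top and cups on bottom, hence lies in the two-dimensional algebra $P_5\,\mathcal{P}'^5_5\,P_5 = \hom(f_5,f_5)$, spanned by $P_5$ and $R$. More seriously, your plan to pin down the coefficients from $\operatorname{tr}(R)=0$, $\operatorname{tr}(R^2)$ and $\operatorname{tr}(P_5)=[6]$ underdetermines $\alpha$: tracing $R^2=\alpha R+\beta P_5$ gives only $\beta$, and $\alpha$ requires $\operatorname{tr}(R^3)$ or equivalent extra input. One can repair this by writing $R = a e_1 + b e_2$ in terms of the two minimal idempotents at depth $5$, with $a d_1 + b d_2 = 0$ for the quantum dimensions $d_1 \neq d_2$, whence $\alpha = a+b \neq 0$ and a rescaling makes $\alpha = 1$; the paper sidesteps the computation entirely by citing equation 4.3.5 of \cite{quadtangle} (with $r=[2]^2/[3]$ and $R$ rescaled by $[3]$). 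Either way, the three traces you name do not suffice.

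Second, the braiding relation, where your plan has the logic backwards and leans on an unproven claim. You propose to verify directly that a strand passes over $R$ in $\mathcal{P}'$, ``inherited from the construction over quantum $SU(2)$,'' and then deduce $\hat{R}\,P_{12}=0$ via Lemma~\ref{lem:braid}. But no such braiding is handed to you by the construction of the subfactor planar algebra --- establishing even a one-sided braiding is a theorem in its own right (compare \cite[Theorem~3.2]{mps} for $D_{2N}$), and in the paper's logical order the braiding in $\mathcal{P}$ is a \emph{consequence} of the relation $\hat{S}\,P_{12}=0$, not a source for it. The paper's actual argument is a one-liner you already have the tools for: $\hat{R}\,P_{12}$ is itself a morphism from $P_{12}$ to the empty diagram, and since $12 \notin \{0,10,18,28\}$ the principal graph forces that morphism space to vanish --- the same fact about $\hom(P_m,\mathbf{1})$ that you used in your dimension count. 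Replacing your braiding detour with this observation closes the gap and removes the only place where your proof ``leans essentially on the external construction'' beyond what the paper itself assumes.
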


\begin{proof}
We take $R$ to be a generator for
the space of morphisms from $P_{10}$ to the empty diagram.

The constant $[2]$ in the bubble bursting relation
is the positive eigenvalue of the $E_8$ graph.

The chirality relation is given in \cite[Theorem~4.2.13]{jonesplanar}.
The ``chirality'' in that theorem
is the value of $\omega^2$ in our chirality relation.
The shaded planar algebra is only unique up to complex conjugation.
The unshaded version also requires
an arbitrary choice of sign of $\omega$.
We chose a value of $\omega$
so that the braiding relation hold
with our definition of a crossing.

The partial trace relation
is immediate from our choice of $R$.

The quadratic relation is equation 4.3.5 in \cite{quadtangle}.
There is currently a misprint:
it should read $R^2 = (1-r)R + rp_n$.
However the proof is correct.
In our context,
$r = [2]^2/[3]$,
and we have rescaled $R$ by a factor of $[3]$.

The relation $\hat{S} P_{12} = 0$
follows from the fact that
there is no non-zero morphism from $P_{12}$ to the empty diagram
in $\mathcal{P}'$.
\end{proof}

\begin{lem}
\label{lem:seven}
In $\mathcal{P}'$,
$id_7$ is equal to
a linear combination of diagrams of the form $AB$
such that $A \in (\mathcal{P}')^7_m$,
and       $B \in (\mathcal{P}')^m_7$,
for some $m < 7$.
\end{lem}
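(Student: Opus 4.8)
The plan is to prove that the image in $\mathcal{P}'$ of the Jones--Wenzl idempotent $P_7$ is zero; the lemma is then immediate. Recall that, expressed in the Temperley--Lieb basis, $P_7$ equals $\id_7$ together with a linear combination of the remaining diagrams of $\TL^7_7$. Every such remaining diagram has fewer than seven through-strands, and hence can be written as $AB$ with $A \in (\mathcal{P}')^7_m$, $B \in (\mathcal{P}')^m_7$ and $m < 7$ (cut along a horizontal line meeting only the through-strands). Thus if the image of $P_7$ in $\mathcal{P}'$ vanishes, then $\id_7$ equals minus this linear combination, which is exactly of the asserted form. So everything reduces to showing $P_7 = 0$ in $\mathcal{P}'$.

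To prove that, I would invoke the standard dictionary between the subfactor planar algebra $\mathcal{P}'$ and its principal graph $E_8$. The algebras $(\mathcal{P}')^n_n$ form a tower whose Bratteli diagram is governed by $E_8$: the minimal central projections of $(\mathcal{P}')^n_n$ are indexed by the vertices of $E_8$ whose distance from the distinguished vertex $\star$ is at most $n$ and congruent to $n$ modulo $2$, the block at a vertex $v$ having size equal to the number of paths of length $n$ from $\star$ to $v$. Under this dictionary, the diagrams that factor through fewer than $n$ strands are precisely those supported on vertices at distance strictly less than $n$, and the image of $P_n$ is the complementary central projection, supported on the vertices at distance exactly $n$. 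In particular, $P_n = 0$ in $\mathcal{P}'$ as soon as $E_8$ has no vertex at distance $n$ from $\star$.

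It then remains to locate $\star$ and read off the depths. The graph $E_8$ is a tree with one trivalent vertex from which emanate arms of $4$, $2$ and $1$ edges, and for the $E_8$ subfactor $\star$ sits at the far end of the longest arm. Measuring distances from $\star$ gives vertices at depths $0,1,2,3,4$ running along the long arm into the trivalent vertex, then two vertices at depth $5$, and a single vertex at depth $6$; there is no vertex at distance $7$. (This is the same depth count that makes the generator $S$, which lives at five strands, the first diagram not already present in the image of $\TL$.) Since $E_8$ has no vertex at distance $7$, the image of $P_7$ is the empty central projection, i.e.\ zero, and the lemma follows.

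The main obstacle is not computational but bookkeeping: one must match the intrinsic notion ``factors through $m < 7$ strands'' with ``supported on vertices of depth $< 7$,'' and get the parity and the position of $\star$ exactly right. In particular the argument is sensitive to the choice of $\star$: placing it instead at the end of the short arm would lower the maximal depth to $5$ and make the corresponding statement fail at seven strands. Once the dictionary and the depths are pinned down, the conclusion $P_7 = 0$, and hence the desired factorization of $\id_7$, is forced.
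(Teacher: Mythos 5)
Your reduction (if $P_7=0$ in $\mathcal{P}'$ then $\id_7$ factors as claimed) is fine, but the central claim it rests on --- $P_7 = 0$ in $\mathcal{P}'$ --- is false, and the ``dictionary'' you use to derive it is exactly where the argument breaks. In a subfactor planar algebra the image of the Temperley--Lieb Jones--Wenzl idempotent $P_n$ is characterized by killing cups and caps; it is \emph{not} in general the central projection supported on the vertices at distance exactly $n$ from $\star$. Those two descriptions coincide in type $A$, but in $\mathcal{P}'$ the ideal of morphisms factoring through fewer than $n$ strands is strictly larger than the ideal generated by Temperley--Lieb cups and caps: once $n$ exceeds the depth of the graph, $P_n$ does not vanish --- it simply decomposes into minimal idempotents of lower depth. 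Concretely, $P_7 \neq 0$ because its closed trace is $[8] = \sin(8\pi/30)/\sin(\pi/30) \neq 0$ in $\mathcal{P}'_0 \cong \Cplx$, a computation taking place entirely in the image of $\TL$ using only the bubble bursting relation. Alternatively: $P_{10}$ absorbs $P_7 \otimes \id_3$, so $P_7 = 0$ would force $P_{10} = 0$ and hence $R = R P_{10} = 0$, contradicting Lemma~\ref{lem:surjection}. Indeed the paper only has $P_{29} = 0$ (Lemma~\ref{lem:jw}, coming from $[30]=0$, not from the depth of the graph), and Lemma~\ref{lem:dimension} shows that $P_{10}$, $P_{18}$ and $P_{28}$ each contain a copy of the empty diagram in their decomposition --- the clearest sign that $P_n$ is not supported at depth $n$.

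The fix is contained in the correct half of your own dictionary. The quotient of $(\mathcal{P}')^7_7$ by the ideal of morphisms factoring through $m<7$ strands is supported exactly on the vertices at distance $7$ (of the right parity) from $\star$; since your depth count is right --- the eccentricity of $\star$ in $E_8$ is $6$ --- there are no such vertices, so that ideal is all of $(\mathcal{P}')^7_7$, and in particular contains $\id_7$. Equivalently, as in the paper's one-line proof: every minimal idempotent of the category is a summand of $\id_m$ for some $m<7$ (in fact $m \le 5$, by parity), so each minimal idempotent in the decomposition of $\id_7$ is conjugate to one occurring at a lower level and hence factors through some $m<7$. No appeal to $P_7$ is needed, and none is available.
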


\begin{proof}
This comes down to
the fact that the $E_8$ graph has diameter less than seven.
Every minimal idempotent
is a summand of $\id_m$ for some $m < 7$.
\end{proof}

\begin{lem}
\label{lem:jw}
$P_{29} = 0$ in $\mathcal{P}'$.
\end{lem}

\begin{proof}
This holds in any subfactor planar algebra where $[30] = 0$.
\end{proof}

We now give the dimension of $\mathcal{P}'_n$
in a form that will be useful later.

\begin{defn}
Suppose $Y \in \TL^m_n$ is a Temperley-Lieb diagram drawn in a rectangle.
Let $x_0,\dots,x_n$ be a sequence of points on the bottom edge of $Y$,
ordered from left to right,
occupying the $n+1$ spaces between the endpoints of strands.
We say $Y$ is {\em JW-reduced} if
$Y$ contains no cups,
and for all $i = 0,\dots,n$,
there are at most $28$ strands
that have one endpoint to the left of $x_i$
and the other endpoint either to the right of $x_i$ or on the top edge of $Y$.
\end{defn}

\begin{lem}
\label{lem:dimension}
For $m,n \ge 0$,
let $d^m_n$ be the number of JW-reduced diagrams in $\TL^m_n$.
Then $\dim (\mathcal{P}'_n) = d^0_n + d^{10}_n + d^{18}_n + d^{28}$.
\end{lem}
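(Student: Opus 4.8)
The plan is to compute the dimension from the tensor-category structure of $\mathcal{P}'$ and then recognize the answer as a weighted count of lattice walks matching $d^0_n,d^{10}_n,d^{18}_n,d^{28}_n$. Let $X$ be the object represented by a single strand, so that $\mathcal{P}'_n=\mathrm{Hom}(1,X^{\otimes n})$. I will use the standard subfactor fact that the simple objects of $\mathcal{P}'$ are the vertices of the $E_8$ graph and that tensoring with $X$ acts on the Grothendieck group by the adjacency matrix $A$; writing $\ast$ for the distinguished end vertex (the trivial object), this gives $\dim(\mathcal{P}'_n)=(A^n)_{\ast\ast}$.

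Next I would decompose $X^{\otimes n}$ through the Temperley--Lieb subcategory. Because $q=e^{i\pi/30}$, the idempotents $P_0,\dots,P_{28}$ are the simple objects of the Temperley--Lieb fusion category while $P_{29}=0$ by Lemma~\ref{lem:jw}, so $X\otimes P_m=P_{m-1}\oplus P_{m+1}$ with the convention $P_{-1}=P_{29}=0$. Hence the multiplicity of $P_m$ in $X^{\otimes n}$ is the number of length-$n$ walks from $0$ to $m$ on the path $0-1-\cdots-28$ that stay within $[0,28]$. The point of the \emph{JW-reduced} condition is that this walk-count is exactly $d^m_n$: reading the $n$ bottom endpoints of a cup-free diagram from left to right produces such a walk, whose running height is the number of strands crossing the gap $x_i$, with the bound ``at most $28$ strands'' realizing the ceiling $28$ and the vanishing of $P_{29}$. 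Combining this with the semisimple decomposition yields
\[
  \dim(\mathcal{P}'_n)=\dim\mathrm{Hom}(1,X^{\otimes n})
     =\sum_{m=0}^{28} d^m_n\,c_m,\qquad c_m:=\dim\mathrm{Hom}(1,P_m).
\]

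It remains to compute $c_m$, the multiplicity of the trivial object in $P_m$ as an object of $\mathcal{P}'$. For this I would run the Chebyshev recursion $[P_m]=[X][P_{m-1}]-[P_{m-2}]$ in the Grothendieck ring, which on the basis of $E_8$ vertices becomes $\vec{p}_m=A\vec{p}_{m-1}-\vec{p}_{m-2}$ with $\vec{p}_0=e_\ast$ and $\vec{p}_{-1}=0$. Iterating this on the $E_8$ graph, the $\ast$-coordinate $c_m=(\vec{p}_m)_\ast$ turns out to equal $1$ exactly for $m\in\{0,10,18,28\}$ and $0$ for every other $m\le 28$; as a consistency check, the case $m=10$ is realized by the generator $S$ itself, the lowest-weight vector of weight $10$, and one recovers $\vec{p}_{29}=0$ in agreement with Lemma~\ref{lem:jw}. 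Substituting these values collapses the sum to $d^0_n+d^{10}_n+d^{18}_n+d^{28}_n$.

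The main obstacle is precisely this last computation. The whole statement rests on the arithmetic fact, special to $E_8$, that the trivial object reappears in $P_m$ exactly at $m=0,10,18,28$; establishing it seems to require iterating the recursion through all $29$ levels, although the reflection symmetry $\vec{p}_{14+j}=\vec{p}_{14-j}$ (visible once $\vec{p}_{15}=\vec{p}_{13}$) halves the labor. A secondary point needing care is the identification of $d^m_n$ with the truncated walk-count on the nose: one must check that excluding cups and capping the height at $28$ reproduces the effect of $P_{29}=0$ exactly, rather than merely as an upper bound.
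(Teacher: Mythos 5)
Your proposal is correct and follows essentially the same route as the paper: decompose $\id_n$ through the Temperley--Lieb quotient (where $P_{29}=0$), identify $d^m_n$ as the multiplicity of $P_m$ in $\id_n$ via the bijection between JW-reduced diagrams and morphisms $\id_n \to P_m$, and then count copies of the trivial object in $P_m$ using the principal graph, getting $1$ exactly for $m \in \{0,10,18,28\}$. The only difference is one of explicitness --- the paper dismisses the last step as ``easy to do using the information encoded in the principal graph,'' whereas you spell it out as the Chebyshev recursion $\vec{p}_m = A\vec{p}_{m-1} - \vec{p}_{m-2}$ on the $E_8$ adjacency matrix, which is exactly the computation the paper has in mind.
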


\begin{proof}
The idea is to decompose $\id_n$
into a direct sum of minimal idempotents
in the category corresponding to $\mathcal{P}'$.
The dimension of $\mathcal{P}'_n$
is the number of copies of the empty diagram in this decomposition.

First we work in the image of the Temperley-Lieb algebra in $\mathcal{P}'$.
This is the Temperley-Lieb algebra
satisfying the bubble bursting relation
and $P_{29} = 0$.
The JW-reduced diagrams in $\TL^m_n$
give a basis for the space of morphisms from $\id_n$ to $P_m$.
Thus $d^m_n$ is the number of copies of $P_m$
in the decomposition of $\id_n$.

Now decompose $P_m$ into minimal idempotents,
working in $\mathcal{P}'$.
This is easy to do using the information encoded in the principal graph.
The number of copies of the empty diagram
in the decomposition of $P_m$
is one if $m \in \{0,10,18,28\}$
and zero otherwise.

Combining these two facts,
we see that the number of copies of the empty diagram
in the decomposition of $\id_n$ into minimal idempotents
is as claimed.
\end{proof}

\section{The isomorphism}
\label{sec:isomorphism}

The aim of this section is to prove the following.

\begin{thm}
\label{thm:iso}
$\mathcal{P}'$ is isomorphic to $\mathcal{P}$
\end{thm}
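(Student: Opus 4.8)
The plan is to construct an explicit isomorphism by first building a map in one direction and then proving it is both surjective and injective, using the dimension count of Lemma~\ref{lem:dimension} to close the argument. Since $\mathcal{P}$ is defined by generators and relations, the natural starting point is Lemma~\ref{lem:surjection}: it provides an element $R \in \mathcal{P}'_{10}$ satisfying all five defining relations of the generator $S$. By the universal property of a presentation, this means there is a well-defined homomorphism of planar algebras $\Phi \co \mathcal{P} \to \mathcal{P}'$ sending $S \mapsto R$. Because $R$ generates $\mathcal{P}'$ (again Lemma~\ref{lem:surjection}), the map $\Phi$ is automatically surjective. The whole problem is thereby reduced to showing that $\Phi$ is injective, equivalently that $\dim(\mathcal{P}_n) \le \dim(\mathcal{P}'_n)$ for every $n$.

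\emph{First} I would establish an upper bound $\dim(\mathcal{P}_n) \le d^0_n + d^{10}_n + d^{18}_n + d^{28}$, matching the exact dimension of $\mathcal{P}'_n$ given by Lemma~\ref{lem:dimension}. Combined with surjectivity of $\Phi$ onto each finite-dimensional space $\mathcal{P}'_n$, any such upper bound that meets the target forces $\dim(\mathcal{P}_n) = \dim(\mathcal{P}'_n)$ and hence makes the surjection $\Phi$ an isomorphism in each degree. So the entire burden shifts to a spanning argument purely inside $\mathcal{P}$: I must exhibit a spanning set for $\mathcal{P}_n$ whose cardinality is at most the JW-reduced count. The natural candidate is the basis constructed in Section~\ref{sec:basis}, whose proof that it spans is described in the introduction as constructive. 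This suggests the intended logic: the spanning result for $\mathcal{P}$ is proved combinatorially from the five relations alone, and its count is arranged to equal $\dim(\mathcal{P}'_n)$.

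\emph{Next}, the spanning argument itself would be an algorithm that takes an arbitrary diagram in $\mathcal{P}$ and rewrites it as a linear combination of JW-reduced ``basis'' diagrams. The five relations supply exactly the moves needed: the quadratic relation removes any adjacent pair $S^2$ of generators, reducing the number of copies of $S$; the partial trace and chirality relations handle strands that close back on a single $S$; the braiding relation (Lemma~\ref{lem:braid}) lets us push strands over $S$ to normalize how generators sit relative to one another; and the bubble-bursting relation plus $P_{29}=0$ cut the Temperley-Lieb part down to JW-reduced form. Lemma~\ref{lem:seven}, which says $\id_7$ reduces through lower strand counts, would be invoked to bound the number of strands connecting consecutive generators, keeping the normal form within the allowed count. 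I expect these reductions to be arranged so that a fully reduced diagram is determined by data matching the four JW-reduced families indexed by $m \in \{0,10,18,28\}$.

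The hard part will be the spanning/termination argument: verifying that the rewriting algorithm actually terminates and that the resulting normal forms are no more numerous than $d^0_n + d^{10}_n + d^{18}_n + d^{28}$. One must choose a well-ordering (presumably on the number of generators $S$, then on some complexity of the Temperley-Lieb skeleton) under which every relation strictly decreases the complexity, so that no infinite rewriting loops occur, and one must confirm that the quadratic relation's ``$+[2]^2[3]P_5$'' term does not reintroduce irreducible configurations. Establishing that the normal-form count matches the JW-reduced count exactly — neither overcounting nor leaving gaps — is the delicate bookkeeping that makes the dimension inequality tight and thereby forces injectivity. Everything else is the formal apparatus: the universal property gives $\Phi$, Lemma~\ref{lem:surjection} gives surjectivity, and Lemma~\ref{lem:dimension} supplies the target dimension against which the combinatorial count is measured.
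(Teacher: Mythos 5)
Your setup (get $\Phi\co\mathcal{P}\to\mathcal{P}'$ from the universal property, surjectivity from Lemma~\ref{lem:surjection}) matches the paper, but your injectivity mechanism has a genuine gap, and it is not the paper's route. You propose to prove, purely from the five defining relations, that the set $\mathcal{B}_n$ of Section~\ref{sec:basis} spans $\mathcal{P}_n$, and then force injectivity by comparing with the dimension count of Lemma~\ref{lem:dimension}. The obstruction is that the spanning argument is not available from the presentation alone: the reduction steps (Lemma~\ref{lem:X} for $n\ge 29$ and all of Lemma~\ref{lem:Y}) rest on $P_{29}=0$, which is Lemma~\ref{lem:jw} --- a fact about the subfactor planar algebra $\mathcal{P}'$, valid in $\mathcal{P}$ only \emph{after} Theorem~\ref{thm:iso} is proved. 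None of the five relations kills $P_{29}$; compare Section~\ref{sec:ade}, where in the $D_{2N}$ case the analogous vanishing $P_{4N-3}=0$ has to be imposed as an explicit defining relation. Without $P_{29}=0$ your rewriting cannot reduce the Temperley--Lieb part to JW-reduced form, the spanning set you can actually produce from the relations is strictly larger than $d^0_n+d^{10}_n+d^{18}_n+d^{28}_n$, and the dimension comparison no longer closes. Your invocation of Lemma~\ref{lem:seven} is also misplaced twice over: it is a statement about $\mathcal{P}'$, so importing it into $\mathcal{P}$ itself requires injectivity of $\Phi$ in low degrees (this is exactly what Lemma~\ref{lem:sixteen} supplies in the paper), and it plays no role in bounding strands between adjacent generators --- that bounding is the pigeonhole-plus-quadratic-relation step inside Lemma~\ref{lem:ten}.

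The paper avoids any dimension count in proving the theorem. It establishes injectivity of $\Phi$ directly: first in degrees $n\le 16$ (Lemma~\ref{lem:sixteen}), using Lemma~\ref{lem:zero} ($\mathcal{P}_0$ is one-dimensional, via the triangulated-polygon argument), Lemma~\ref{lem:ten}, and Lemma~\ref{lem:morphism} (the rotation-eigenvalue computation $X=q^{2k(k+1)}X$, which kills morphisms from $P_n$ to the empty diagram for $n<29$, $n\notin\{0,10,18,28\}$) --- all proved from the relations alone. Then for $n>16$ it lifts the decomposition of $\id_7$ from Lemma~\ref{lem:seven} into $\mathcal{P}$, which is legitimate precisely because the elements involved live in degrees at most $13<16$ where $\Phi$ is already known injective, and inducts on $n$ via $XA=0$. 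Lemma~\ref{lem:dimension} is used only afterwards, in Section~\ref{sec:basis}, to upgrade the spanning set to a basis --- with the logical arrow pointing opposite to yours: the isomorphism licenses $P_{29}=0$ in $\mathcal{P}$ and hence the spanning argument, not the other way around. To salvage your plan you would either have to add $P_{29}=0$ to the presentation (changing the theorem) or derive it from the five relations, which the paper never does and which is not a routine step.
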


By Lemma~\ref{lem:surjection},
there is a surjective planar algebra morphism $\Phi$
from $\mathcal{P}$ to $\mathcal{P}'$,
taking $S$ to $R$.
It remains to show that $\Phi$ is injective.

\begin{lem}
\label{lem:zero}
Every element of $\mathcal{P}_0$
is a scalar multiple of the empty diagram.
\end{lem}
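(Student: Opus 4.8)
The plan is to prove the stronger statement that every closed diagram (a diagram with no strand-endpoints on its boundary) equals a scalar multiple of the empty diagram, by induction on the number of copies of the generator $S$ that the diagram contains. Since $\mathcal{P}_0$ is spanned by such closed diagrams, this suffices. For the base case, a closed diagram containing no copy of $S$ is a disjoint union of closed Temperley-Lieb loops, and the bubble-bursting relation collapses each loop to a factor of $[2]$, leaving the empty diagram. So I may assume the diagram $D$ contains $s \ge 1$ copies of $S$, and after removing closed loops and splitting off disconnected pieces I may assume $D$ is connected.

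For the inductive step I want to rewrite $D$ as a linear combination of closed diagrams each containing fewer than $s$ copies of $S$. First I would dispose of the easy case: if some copy of $S$ has two adjacent strands joined to one another by a cap, then the partial trace relation $\tau(S) = 0$, together with the chirality relation $\rho(S) = \omega S$ (which rotates the cap onto the distinguished pair of strands up to the scalar $\omega$), shows that the whole term vanishes. Hence I may assume no copy of $S$ carries such a returning arc. The goal is then to locate two copies of $S$ joined by five parallel strands in the configuration of $S^2$, so that the quadratic relation $S^2 = S + [2]^2[3]\,P_5$ replaces them by a diagram with one copy of $S$ plus a Temperley-Lieb diagram, strictly decreasing the count of generators.

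To produce such a configuration I would argue by an Euler-characteristic count. Regarding $D$ as a graph on the sphere with $s$ ten-valent vertices and hence $5s$ edges, there are $4s+2$ faces, so the average number of sides of a face is $10s/(4s+2) < 3$, forcing the existence of a face bounded by at most two edges. A one-sided face is a cap on a single $S$ and is already handled, while a two-sided face exhibits two strands running in parallel between (generically) two distinct copies of $S$. I would then invoke the braiding relation of Lemma~\ref{lem:braid}, which allows strands to be passed over a copy of $S$, to sweep the remaining strands of these two $S$'s over one another and so raise the number of parallel strands joining them from two up to five, at which point the quadratic relation applies and the induction proceeds.

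The step I expect to be the main obstacle is precisely this last one: upgrading ``a small face exists'' to ``the quadratic or partial-trace relation genuinely applies.'' The braiding relation only lets strands pass \emph{over} $S$ and not under it, so all of these manoeuvres must respect a consistent over-strand discipline, and I must check that widening a bigon into a band of five parallel strands does not create new returning caps or uncontrolled crossings elsewhere in the diagram. Turning this into an honest, terminating reduction --- that is, exhibiting a well-founded complexity measure on closed diagrams that strictly decreases at each stage (with the number of copies of $S$ as the leading term) --- is the delicate part, and is where the care of the argument will have to go.
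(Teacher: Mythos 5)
Your setup (induction on the number of copies of $S$, bubble bursting for the base case, chirality plus partial trace to kill returning arcs, quadratic relation to reduce the count of generators) matches the paper, but the step you yourself flag as delicate is a genuine gap, and it cannot be repaired the way you suggest. Your Euler-characteristic count over \emph{faces} only produces a bigon, i.e.\ an edge of multiplicity at least $2$, and the braiding relation of Lemma~\ref{lem:braid} cannot ``widen'' this to the five parallel strands the quadratic relation requires: passing a strand over a copy of $S$ never changes which pair of generators a strand connects, so the multiplicity of the connection between any two fixed copies of $S$ is invariant under every move the braiding relation permits. There is no relation in the presentation that merges two copies of $S$ joined by only two strands --- the paper does eventually prove such a reduction (Lemma~\ref{lem:jointwo}), but its proof goes through $\operatorname{Join}_2(S,S)P_{16}=0$ via the eigenvalue argument of Lemma~\ref{lem:morphism}, a substantially heavier input than the strand-sweeping your sketch invokes, and you supply neither it nor a substitute.

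The paper closes exactly this gap by counting at \emph{vertices} rather than faces. After using the braiding relation to place the $m$ copies of $S$ on a convex $m$-gon with all strands inside, resolving crossings, and removing loops and returning arcs, the reduced connection graph is a triangulated polygon (adding multiplicity-zero edges if necessary), and every triangulated polygon has an ear: a vertex adjacent to only two other vertices. Since that vertex emits $10$ strands and planarity forces the strands to each neighbour to form a contiguous block, the pigeonhole principle gives an edge of multiplicity at least $5$, i.e.\ a copy of $S^2$ up to rotation (absorbed by the chirality relation), and the quadratic relation then strictly decreases the number of copies of $S$ with no upgrading step needed. If you replace your face count and bigon-widening manoeuvre with this ear argument, the rest of your proof goes through as written.
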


\begin{proof}
Suppose $D$ is a diagram in $\mathcal{P}_0$.
We must show that $D$ is a scalar multiple of the empty diagram.
Let $m$ be the number of copies of $S$ in $D$.
We will use induction on $m$.
By the braiding relation,
we can assume the copies of $S$ lie on
the vertices of a regular $m$-gon,
and that every strand lies inside this $m$-gon.

By resolving all crossings,
we can reduce to the case that $D$ is a diagram with no crossings.  
By the bubble bursting relation,
we can assume $D$ contains no closed loops.
By the chirality and partial trace relations,
we can assume $D$ contains no strand
with both endpoints on the same copy of $S$.
Thus every strand in $D$ connects
two distinct copies of $S$.

We can think of $D$ as a triangulated $m$-gon,
where the edges have multiplicities.
(We may need to add some edges with multiplicity zero
if we want the edges to cut the $m$-gon into triangles.)
Any triangulated polygon
has a vertex with valency two,
not counting multiplicities.
However this vertex has valency $10$ if we count multiplicities.
Thus there is an edge with multiplicity at least $5$.
This gives us a copy of $S^2$ inside $D$,
up to rotation of the copies of $S$.
The result now follows from
the quadratic relation and the induction hypothesis.
\end{proof}

\begin{lem}
\label{lem:ten}
$\mathcal{P}_{10}$ is spanned by
Temperley-Lieb diagrams and $S$.
\end{lem}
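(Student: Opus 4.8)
The plan is to imitate the proof of Lemma~\ref{lem:zero}, reducing an arbitrary diagram $D \in \mathcal{P}_{10}$ to a linear combination of Temperley--Lieb diagrams and $S$ by induction on the number $m$ of copies of $S$ occurring in $D$. First I would put $D$ into a normal form: using the braiding relation I can pass any strand over any copy of $S$, so after resolving all crossings I may assume $D$ has no crossings; the bubble bursting relation removes closed loops; and the chirality and partial trace relations together kill any strand with both endpoints on a single copy of $S$ (rotate the offending cap into the position of $\tau(S)$ using $\rho(S)=\omega S$, then apply $\tau(S)=0$). After this cleanup every strand of $D$ either joins two distinct copies of $S$, joins a copy of $S$ to the boundary, or joins two boundary points.

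The base cases are $m=0$, which is by definition a Temperley--Lieb diagram, and $m=1$. In the latter case the single copy of $S$ has no self-caps, so each of its ten legs runs to one of the ten boundary points; the diagram is therefore a rotation of $S$, and the chirality relation identifies it with a scalar multiple of $S$. For the inductive step $m \ge 2$ the goal is to exhibit a copy of $S^2$ inside $D$, that is, two copies of $S$ joined by five parallel strands, so that the quadratic relation $S^2 = S + [2]^2[3] P_5$ replaces them by a diagram with strictly fewer copies of $S$ (one fewer from the $S$ term, two fewer from the Temperley--Lieb term $P_5$), after which the induction hypothesis finishes the argument. To locate such a pair I would record the combinatorial data of $D$ as a planar multigraph $G$ whose vertices are the $m$ copies of $S$ and whose edges are the strands joining them, carrying multiplicities. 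Each vertex has total degree $10$, and since there are only ten boundary points at most ten legs in total can escape to the boundary; hence $G$ has at least $5(m-1)$ edges counted with multiplicity. For $m=2$ this already forces at least five strands between the two copies of $S$, giving $S^2$ at once (up to a rotation handled by chirality, absorbing any excess strands).

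The hard part is the case $m \ge 3$, where a naive edge count is too weak: a planar graph on $m$ vertices can carry up to $3m-6$ distinct edges, so the bound $5(m-1)$ on the total multiplicity does not by itself force any single edge to have multiplicity five. This is exactly where the argument must go beyond Lemma~\ref{lem:zero}. There the copies of $S$ sit in convex position and the diagram is a triangulated polygon, which always has an \emph{ear} --- a vertex of combinatorial valency two --- and an ear of total degree $10$ must have an incident edge of multiplicity at least $5$. Here the boundary strands destroy outerplanarity, so $G$ need not have an ear. I would instead work on the sphere, adjoining a single vertex $B$ in the disk's exterior joined to the copies of $S$ by the boundary strands, and run an Euler characteristic count on the resulting map: the relations $V=m+1$ and $E=5(m+1)$ force many bigon faces, and a bigon between a pair of vertices witnesses an edge of multiplicity at least two. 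The crux --- and the step I expect to be the main obstacle --- is to upgrade this abundance of bigons, using the constraint $\sum b_i \le 10$ on the boundary legs, to a copy of $S$ whose internal legs are concentrated on a single neighbour, thereby producing five parallel strands. Making this extremal argument airtight, so that it genuinely locates a copy of $S^2$ rather than merely a high total multiplicity spread over several edges, is the delicate point of the proof.
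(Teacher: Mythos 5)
Your setup --- induction on the number $m$ of copies of $S$, the cleanup via braiding, bubble bursting, chirality and partial trace, the base cases $m=0,1$, and the strategy of locating five parallel strands between two copies of $S$ so the quadratic relation drops $m$ --- all matches the paper, and your $m=2$ count is correct. But the heart of the lemma is the inductive step for $m \ge 3$, and there your proposal has a genuine gap, which you yourself flag: the Euler-characteristic count on the sphere only produces bigons, i.e.\ edges of multiplicity at least $2$, and you give no argument upgrading this to a single edge of multiplicity $5$. As stated, the bound $E = 5(m+1)$ is also not right in general (strands may join two boundary points, and the boundary vertex $B$ has degree exactly $10$, not $10$ per incidence), but the real problem is that the extremal step you call ``the delicate point'' is precisely the content of the lemma, and it is missing.

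The idea you overlooked is that the braiding relation does more than let you resolve crossings: since a strand can be passed over $S$, you can slide each copy of $S$ along any path, so you may place all $m$ copies \emph{in a row at the top of the rectangle}, with every strand lying below the tops of the copies and the $10$ boundary points on the bottom edge. This restores exactly the outerplanar structure whose loss worried you, and the rest is elementary. If some strand joins a non-adjacent pair of copies, take an innermost such arc; any copy of $S$ beneath it is cut off from the bottom edge and from non-adjacent copies, so all $10$ of its legs go to its two neighbors, and one neighbor receives at least $5$. Otherwise every strand joins adjacent copies or runs to the bottom edge; since only $10$ legs in total reach the bottom, the leftmost or rightmost copy sends at most $5$ legs down, hence at least $5$ to its unique neighbor. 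In either case the chirality relation rotates the configuration into a copy of $S^2$ and the quadratic relation finishes the induction. So your proposal is not wrong in outline, but without this normal form the case $m \ge 3$ remains unproved, and the route you sketch in its place does not close it.
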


\begin{proof}
Suppose $D$ is a diagram in $\mathcal{P}^0_{10}$.
We must show that $D$ is a linear combination
of Temperley-Lieb diagrams and $S$.
We use induction on the number of copies of $S$ in $D$.

By the braiding relation,
we can assume the copies of $S$
lie in a row at the top of $D$,
and all strands of $D$
lie entirely below the height of the tops of the copies of $S$.
As in the proof of the previous lemma,
we can assume that
every strand connects two distinct copies of $S$,
or has at least one endpoint on the bottom edge of $D$.

Suppose there is a strand
that connects a non-adjacent pair of copies of $S$.
Between these copies of $S$
there must exist a copy of $S$
that is connected only to its two adjacent copies.
It must be connected to at least one of these
by at least $5$ strands.
The result now follows from the quadratic relation
and induction on the number of copies of $S$.

Now suppose every strand
either connects adjacent copies of $S$
or has at least one endpoint on the bottom edge of $D$.
If there is exactly one copy of $S$ in $D$ then we have $D = S$,
and we are done.
Suppose $D$ contains at least two copies of $S$.
Either the leftmost or rightmost copy of $S$
is connected to the bottom of $D$ by at most $5$ strands.
This copy of $S$
is connected to its only adjacent copy of $S$
by at least $5$ strands.
The result now follows from the quadratic relation
and induction on the number of copies of $S$.
\end{proof}

\begin{defn}
Suppose $X \in \mathcal{P}^0_n$ for some $n < 29$.
Then $X$ is a {\em morphism} from $P_n$ to the empty diagram if $XP_n = X$,
or equivalently,
if $XY = 0$ for every cup $Y \in \mathcal{P}^n_{n-2}$.
\end{defn}

\begin{lem}
\label{lem:morphism}
If $n < 29$ and $n \not \in \{0,10,18,28\}$
then every morphism from $P_n$ to the empty diagram is zero.
\end{lem}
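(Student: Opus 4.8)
The plan is to argue by induction on the number $k$ of copies of $S$ appearing in a diagram representing a morphism $X \in \mathcal{P}^0_n$ with $XP_n = X$. First I would dispose of parity: since $S$ has ten legs, every diagram in $\mathcal{P}^0_n$ has $n$ even, so $\mathcal{P}^0_n = 0$ for odd $n$ and we may assume $n$ is even. For the base case $k = 0$ the diagram $X$ is a Temperley--Lieb diagram with no top endpoints, so all $n$ of its bottom endpoints are joined in pairs by arcs; each such arc caps $P_n$ from above, and since $P_n$ is annihilated by caps we get $X = XP_n = 0$ whenever $n > 0$.

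For the inductive step I would normalize exactly as in the proofs of Lemmas~\ref{lem:zero} and~\ref{lem:ten}: using the braiding relation, place the copies of $S$ in a row, resolve all crossings, and then apply bubble bursting, the partial trace relation and chirality so that every strand either joins two distinct copies of $S$ or meets the bottom boundary, and (by the non-adjacency argument of Lemma~\ref{lem:ten}) so that only adjacent copies of $S$ in the row are joined. Writing $a_i$ for the number of strands joining the $i$-th and $(i+1)$-st copies and $b_i$ for its number of boundary legs, we have $a_{i-1} + a_i + b_i = 10$. If some $a_i \ge 5$, then $S_i$ and $S_{i+1}$ contain a copy of $S^2$, and the quadratic relation rewrites $X$ as a combination of diagrams with fewer copies of $S$, each still a morphism to $P_n$; these vanish by induction. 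The same reduction applies whenever an end copy of $S$ has at most five boundary legs, since it is then joined to its unique neighbor by at least five strands. Note the single-$S$ case ($k=0$ copies beyond the base, all ten legs on the boundary) forces $n = 10$, which is excluded by hypothesis, consistently with the lemma.

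The remaining --- and hardest --- case is the \emph{spread out} one, in which every adjacency carries at most four strands and both end copies of $S$ carry at least six boundary legs, so the quadratic relation no longer applies. Here the goal is to show that such a configuration, once capped by $P_n$, vanishes unless $n \in \{10,18,28\}$. The main obstacle is precisely this step. I would feed the six or more consecutive boundary legs of the leftmost copy $S_1$ into a Jones--Wenzl idempotent via the absorption identity $X = X\,(P_{b_1} \otimes \id_{n-b_1})$, and then play the braiding relation $\hat{S}P_{12} = 0$ off against this $P_{b_1}$, iterating the over-strand argument of Lemma~\ref{lem:braid} to show that a single copy of $S$ cannot be promoted to a Jones--Wenzl idempotent of forbidden size. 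Extracting \emph{exactly} the surviving set $\{0,10,18,28\}$ from this analysis --- rather than merely killing most indices --- is the delicate heart of the argument, and is where I expect the bulk of the work, and the genuine use of the specific quantum parameter $q = e^{i\pi/30}$, to lie.
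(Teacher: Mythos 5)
There is a genuine gap, and you have correctly located it yourself: everything except the ``spread out'' case is routine, and the spread out case is the entire content of the lemma. Your sketched plan for it --- feeding the boundary legs of $S_1$ into a Jones--Wenzl idempotent and ``iterating the over-strand argument of Lemma~\ref{lem:braid}'' --- is not an argument, and there is reason to doubt it can be made into one along these lines: in the paper's development the reduction of exactly such configurations (Lemma~\ref{lem:jointwo}, which rewrites two copies of $S$ joined by two strands, i.e.\ the basic spread-out configuration with $n=16$) is proved \emph{using} Lemma~\ref{lem:morphism}, by hitting $\operatorname{Join}_2(S,S)$ with $P_{16}$ and knowing in advance that the result is zero. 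So your proposed route runs the logic backwards: the vanishing for the intermediate values $n \in \{2,4,\dots,16,20,\dots,26\}$ is the input that makes the diagrammatic reductions work, not something obtainable from them. Your induction scheme also gives no visible mechanism for distinguishing $n=16$ (where morphisms vanish) from $n=18$ (where they do not), since the relevant diagrams look combinatorially identical.

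The paper's actual proof is completely different and much shorter: a rotation-eigenvalue computation. Writing $n = 2k$ (odd $n$ being trivial), take a morphism $X$ from $P_n$ to the empty diagram, so $XY = 0$ for every cup $Y$. Wrap all $2k$ strands of $X$ once around the diagram by an isotopy, pass them back over $X$ using the braiding relation, and resolve all crossings; every resolution containing a cup attached to $X$ dies, leaving $X$ times a scalar. The scalar is $(iq^{3/2})^{2k}$ from the $2k$ self-crossings (Reidemeister one) times $(iq^{1/2})^{2k(2k-1)}$ from the crossings of distinct strands, which simplifies to $q^{2k(k+1)}$. Hence $X = q^{2k(k+1)}X$, and since $q = e^{i\pi/30}$ has order $60$, a nonzero $X$ forces $30 \mid k(k+1)$; for $2k < 29$ this happens exactly when $k \in \{0,5,9,14\}$, i.e.\ $n \in \{0,10,18,28\}$. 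Your instinct that the specific value of $q$ must enter at the hard step was right --- it enters through the order of $q$ in this eigenvalue equation --- but the mechanism is this global rotation trick, not a local diagrammatic reduction.
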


\begin{proof}
First note that $\mathcal{P}_n$ is zero for odd values of $n$.
Thus we can assume $n = 2k$.
We have the following identities,
shown in the case $k=1$.
$$\labellist
  \tiny \hair 1pt
  \pinlabel $X$ at 50 55
  \endlabellist
  \raisebox{-18pt}{\includegraphics[scale=0.4]{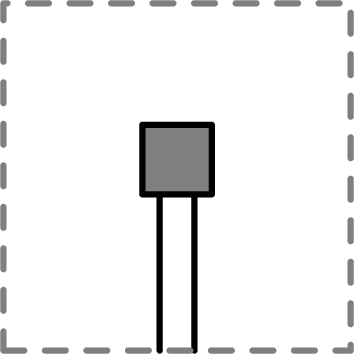}}
=
  \labellist
  \tiny \hair 1pt
  \pinlabel $X$ at 50 55
  \endlabellist
  \raisebox{-18pt}{\includegraphics[scale=0.4]{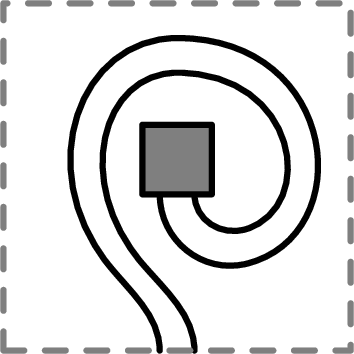}}
=
  \labellist
  \tiny \hair 1pt
  \pinlabel $X$ at 50 55
  \endlabellist
  \raisebox{-18pt}{\includegraphics[scale=0.4]{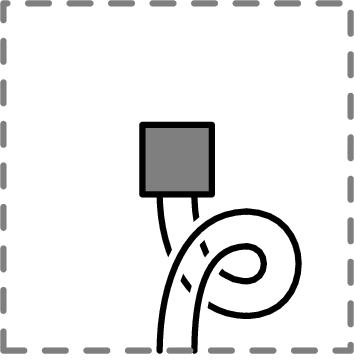}}
= q^{2k(k+1)}
  \labellist
  \tiny \hair 1pt
  \pinlabel $X$ at 50 55
  \endlabellist
  \raisebox{-18pt}{\includegraphics[scale=0.4]{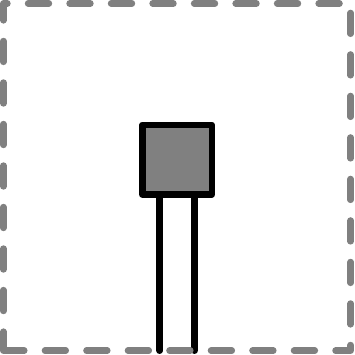}}
$$
The first equality is an isotopy.
The second follows from the braiding relation.
To prove the third,
resolve each crossing
and eliminate all terms that contain a cup attached to $X$.
There are $2k$ instances of a strand crossing itself.
Each of these contributes a factor of $iq^{3/2}$,
by Reidemeister one.
There are $2k(2k-1)$ instances of two distinct strands crossing.
Each of these contributes a factor of $iq^{1/2}$.

Thus we have $X = q^{2k(k+1)} X$.
If $X$ is non-zero then $q^{2k(k+1)} = 1$,
so $2k(k+1)$ is a multiple of $60$.
The result now follows from simple case checking.
\end{proof}

\begin{lem}
\label{lem:sixteen}
$\Phi$ is injective on $\mathcal{P}_n$ for $n \le 16$.
\end{lem}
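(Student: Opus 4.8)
The plan is to prove injectivity by a dimension count. Since Lemma~\ref{lem:surjection} gives a \emph{surjective} morphism $\Phi \co \mathcal{P}_n \to \mathcal{P}'_n$ and each $\mathcal{P}'_n$ is finite dimensional, it suffices to establish the reverse inequality $\dim \mathcal{P}_n \le \dim \mathcal{P}'_n$ for $n \le 16$; equality of dimensions then forces the surjection $\Phi$ to be an isomorphism, hence injective. I will bound $\dim \mathcal{P}_n$ from above by a sum that matches the formula of Lemma~\ref{lem:dimension}.

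First I would record the relevant morphism spaces. For $m < 29$ let $\mu_m$ denote the dimension of the space of morphisms from $P_m$ to the empty diagram in $\mathcal{P}$, that is, the space of $V \in \mathcal{P}^0_m$ with $V P_m = V$. By Lemma~\ref{lem:morphism}, $\mu_m = 0$ unless $m \in \{0,10,18,28\}$, and by Lemma~\ref{lem:zero}, $\mu_0 = 1$. For $m = 10$, Lemma~\ref{lem:ten} shows every such $V$ is a linear combination of Temperley--Lieb diagrams and $S$. Every Temperley--Lieb diagram in $\mathcal{P}^0_{10}$ consists of cups, so is killed on composing with $P_{10}$, while $S$ itself satisfies $S P_{10} = S$: the chirality and partial trace relations together say exactly that capping any adjacent pair of strands of $S$ gives zero, i.e.\ that $SY = 0$ for every cup $Y$. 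Hence $\mu_{10} = 1$.

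Next I would put an arbitrary $X \in \mathcal{P}^0_n$ into a standard form using only the Temperley--Lieb structure. Because $q = e^{i\pi/30}$, the quantum integers $[k]$ are non-zero for $1 \le k \le 16$, so the idempotents $P_m$ exist for all $m \le n \le 16$ and $\TL^n_n$ is semisimple. Decomposing $\id_n$ into matrix units relative to the $P_m$ gives, for each $m$, a family of JW-reduced diagrams $B_1,\dots,B_{d^m_n} \in \TL^m_n$ (with $B_i = P_m B_i$) and dual diagrams $A_i \in \TL^n_m$ (with $A_i = A_i P_m$) such that $\id_n = \sum_{m,i} A_i B_i$. Setting $V_i = X A_i$, the relation $A_i = A_i P_m$ gives $V_i P_m = V_i$, so each $V_i$ is a morphism from $P_m$ to the empty diagram, and
$$X = X\,\id_n = \sum_{m,i} (X A_i) B_i = \sum_{m,i} V_i B_i.$$
Thus the linear map $X \mapsto (V_i)$ is injective, giving $\dim \mathcal{P}_n = \dim \mathcal{P}^0_n \le \sum_m d^m_n \, \mu_m$.

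Finally I would assemble the pieces. For $n \le 16$ the only $m$ with $d^m_n \neq 0$ satisfy $m \le n \le 16$, since a JW-reduced diagram in $\TL^m_n$ has no cups and so runs every top endpoint to a distinct bottom endpoint, forcing $m \le n$; in particular $d^{18}_n = d^{28}_n = 0$. Combining this with the values of $\mu_m$ above yields $\dim \mathcal{P}_n \le d^0_n + d^{10}_n$, while Lemma~\ref{lem:dimension} gives $\dim \mathcal{P}'_n = d^0_n + d^{10}_n + d^{18}_n + d^{28}_n = d^0_n + d^{10}_n$. Hence $\dim \mathcal{P}_n \le \dim \mathcal{P}'_n$, and together with surjectivity this proves $\Phi$ injective on $\mathcal{P}_n$. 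I expect the main obstacle to be making the matrix-unit decomposition of $\id_n$ precise and checking that the $B_i$ may be taken JW-reduced while keeping the composition conventions straight; this is standard Temperley--Lieb representation theory, but it is the only genuinely technical step. The hypothesis $n \le 16$ is exactly what forces all through-strand counts below $18$, so that only the already-understood spaces $\mu_0$ and $\mu_{10}$ can contribute.
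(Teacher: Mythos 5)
Your proof is correct, but it takes a genuinely different route from the paper. The paper's proof is a short downward induction on $n$: given $X$ in the kernel of $\Phi$, compose with every cup $Y$; by induction $XY=0$, so $X$ is a morphism from $P_n$ to the empty diagram, and Lemma~\ref{lem:morphism} kills it, with the base cases $n=0$ and $n=10$ handled by Lemmas \ref{lem:zero} and \ref{lem:ten}. You instead prove the stronger quantitative statement $\dim \mathcal{P}_n \le d^0_n + d^{10}_n$ via the Temperley--Lieb decomposition of $\id_n$ and compare with Lemma~\ref{lem:dimension}, letting surjectivity of $\Phi$ finish the job. Both arguments lean on the same three lemmas about $\mathcal{P}$, but yours additionally imports Lemma~\ref{lem:dimension} and the semisimple matrix-unit machinery (valid here since $[k]\neq 0$ for $k\le 29$ and $n\le 16$), which the paper only deploys later; in effect you are running the Section~\ref{sec:basis} spanning-plus-dimension-count strategy early. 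What the paper's route buys is brevity and minimal machinery; what yours buys is the explicit dimension bound and a preview of the basis theorem. Two small points of polish, neither a gap: a literal JW-reduced diagram $B_i$ cannot satisfy $B_i = P_m B_i$ --- the correct statement is $\id_n = \sum_{m,i} c_i\, \bar A_i P_m B_i$ with $\bar A_i$ the reflection of $B_i$ and normalizing scalars $c_i$, which is all your injectivity argument $X \mapsto (X \bar A_i P_m)$ needs (you flagged this yourself); and your argument only requires $\mu_{10} \le 1$, so the assertion $\mu_{10}=1$ (which tacitly needs $S \neq 0$ in $\mathcal{P}$) can be dropped, though it is in fact true since $\Phi(S)=R\neq 0$. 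Your observation that $SY=0$ for every cup $Y$ via the chirality and partial trace relations is exactly right and is the same mechanism the paper uses elsewhere.
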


\begin{proof}
The case $n = 0$ follows from Lemma~\ref{lem:zero},
and the case $n = 10$ follows from Lemma~\ref{lem:ten}.
Suppose $X \in \mathcal{P}^0_n$ is in the kernel of $\Phi$,
where $n \le 16$ and $n \not\in \{0,10\}$.
If $Y$ is a cup
then $XY$ is in the kernel of $\Phi$,
so $XY = 0$ by induction on $n$.
Thus $X$ is a morphism from $P_n$ to the empty diagram.
The result now follows from
Lemma~\ref{lem:morphism}.
\end{proof}

We are now ready to prove Theorem~\ref{thm:iso}.
Suppose $X \in \mathcal{P}_n$
is in the kernel of $\Phi$.
We must show $X = 0$.
We can assume $n > 16$.

Write $X$ as an element of $\mathcal{P}^{n-7}_7$.
The relation in Lemma~\ref{lem:seven} also holds in $\mathcal{P}$,
by Lemma~\ref{lem:sixteen}.
Thus $X$ is a linear combination of diagrams of the form $XAB$,
where $A \in \mathcal{P}^7_m$
and $B \in \mathcal{P}^m_7$
for some $m > 7$.
For any such $A$,
$XA$ is in the kernel of $\Phi$,
so $XA = 0$ by induction on $n$.
Thus $X = 0$.
This completes the proof that $\Phi$ is an isomorphism.

\section{A basis}
\label{sec:basis}

We now define a set of diagrams that will form a basis for $\mathcal{P}_n$.

\begin{defn}
Let $\mathcal{B}_n$ be the set of diagrams of the form $XY$,
where $X$ is one of the four diagrams shown in Figure~ref{fig:Xs},
and $Y$ is a JW-reduced Temperley-Lieb diagram.
\end{defn}

The aim of this section is to prove the following.

\begin{thm}
\label{thm:basis}
$\mathcal{B}_n$ is a basis for $\mathcal{P}_n$.
\end{thm}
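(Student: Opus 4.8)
The plan is to prove the two halves of the claim separately: that $\mathcal{B}_n$ spans $\mathcal{P}_n$, and that $|\mathcal{B}_n| = \dim \mathcal{P}_n$. Since a spanning set whose cardinality equals the dimension of a finite-dimensional space is automatically linearly independent, these two facts together give the theorem. The cardinality count is the easy half. By construction each element of $\mathcal{B}_n$ is $X_m Y$, where $m \in \{0,10,18,28\}$, the head $X_m$ lies in $\mathcal{P}^0_m$, and $Y$ ranges over the JW-reduced diagrams in $\TL^m_n$. For each fixed $m$ there are exactly $d^m_n$ choices of $Y$, and distinct values of $m$ or distinct $Y$ give visibly distinct diagrams, so $|\mathcal{B}_n| = d^0_n + d^{10}_n + d^{18}_n + d^{28}_n$. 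By Lemma~\ref{lem:dimension} this is $\dim \mathcal{P}'_n$, and by Theorem~\ref{thm:iso} we have $\dim \mathcal{P}_n = \dim \mathcal{P}'_n$. Hence $|\mathcal{B}_n| = \dim \mathcal{P}_n$, and it remains only to prove spanning.

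For spanning I would take an arbitrary diagram $D \in \mathcal{P}^0_n$ and reduce it to a linear combination of elements of $\mathcal{B}_n$ in two stages, echoing the constructive arguments of Lemmas~\ref{lem:zero} and~\ref{lem:ten}. First, resolve all crossings, so that $D$ becomes a linear combination of diagrams built only from copies of $S$ and Temperley-Lieb strands. Using the braiding relation I would push all copies of $S$ into a standard position near the top of the rectangle, and then use bubble bursting, chirality, and the partial trace relation to discard closed loops and strands returning to a single copy of $S$. The key reduction is the quadratic relation: whenever two copies of $S$ are joined by at least five strands, $S^2 = S + [2]^2[3]P_5$ lets me trade them for a diagram with one fewer copy of $S$ plus a purely Temperley-Lieb correction term. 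A triangulation and valency argument, exactly as in Lemma~\ref{lem:zero}, guarantees that such a pair can always be produced unless the configuration of copies of $S$ has already been driven down to one of the four irreducible heads $X_0$, $X_{10}$, $X_{18}$, $X_{28}$. Running this to completion writes $D$ as a linear combination of diagrams of the form $X_m Z$, where $Z$ is an arbitrary Temperley-Lieb diagram in $\TL^m_n$.

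The second stage replaces each Temperley-Lieb tail $Z$ by a linear combination of JW-reduced diagrams. Each head $X_m$ is a morphism from $P_m$ to the empty diagram, so $X_m = X_m P_m$ and any cup attached to the bottom of $X_m$ is annihilated; this lets me assume $Z$ contains no cup. Inserting $P_m$ and using $P_{29} = 0$ from Lemma~\ref{lem:jw}, I can further impose that at most $28$ strands cross any vertical line, which is precisely the remaining condition defining JW-reducedness. This is exactly the skein reduction of an arbitrary Temperley-Lieb diagram onto the JW-reduced basis for morphisms from $\id_n$ to $P_m$ that was used in the proof of Lemma~\ref{lem:dimension}. Carrying out both stages expresses $D$ in terms of $\mathcal{B}_n$, completing the proof of spanning and hence the theorem.

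I expect the genuine obstacle to be the first stage of spanning: proving that the $S$-reduction terminates and that its irreducible outputs are exactly the four heads $X_0$, $X_{10}$, $X_{18}$, $X_{28}$ and nothing more. Matching the terminal configurations of the quadratic-relation reduction to precisely the values $\{0,10,18,28\}$ --- the depths at which the empty diagram appears in the decomposition of $P_m$ --- is what ties the combinatorics of the reduction to the dimension formula, and getting this correspondence exactly right, rather than over- or under-producing heads, is the delicate point.
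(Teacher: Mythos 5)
Your overall architecture --- spanning plus the count $|\mathcal{B}_n| = d^0_n + d^{10}_n + d^{18}_n + d^{28}_n = \dim \mathcal{P}'_n = \dim \mathcal{P}_n$, with independence following automatically --- is exactly the paper's, and your second stage (reducing the Temperley--Lieb tail to JW-reduced form via $P_{29}=0$, with cups annihilated against $X_m P_m$) matches the paper's Lemma~\ref{lem:Y}. But your first stage has a genuine gap, at precisely the point you flagged as delicate. The triangulation/valency argument of Lemma~\ref{lem:zero} requires every strand to join two copies of $S$, so it is special to closed diagrams; the variant in Lemma~\ref{lem:ten} is special to $n=10$, where the leftmost or rightmost copy of $S$ can send at most $5$ strands to the boundary. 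For general $n$ the argument fails outright: take two copies of $S$ joined by exactly two strands, with the remaining $16$ strands running to the bottom edge. No pair of copies of $S$ is joined by five or more strands, so the quadratic relation never applies, yet the configuration is not one of the four heads. Indeed, quadratic-only reduction gets stuck at any configuration in which each pair of copies of $S$ is joined by at most four strands, and there are such configurations with arbitrarily many copies of $S$, so your reduction over-produces ``irreducible heads'' and the cardinality count can no longer close the argument.

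The paper fills this gap with two ingredients absent from your proposal. First, Lemma~\ref{lem:jointwo}: when two copies of $S$ are joined by exactly two strands, multiply by $P_{16}$; this product vanishes by Lemma~\ref{lem:morphism} (there is no nonzero morphism from $P_{16}$ to the empty diagram), and expanding $P_{16}$ into Temperley--Lieb diagrams --- identity with coefficient one, every other term containing a cup that must connect the two copies of $S$ --- rewrites the diagram in terms of configurations joined by \emph{three} strands; iterating with $P_{14}$ and $P_{12}$ steps the join up to five strands, where the quadratic relation finally applies. Second, heads $X \in \mathcal{P}^0_m$ with $m \notin \{0,10,18,28\}$ are not avoided by reduction but killed directly: $XP_m = 0$ by Lemma~\ref{lem:morphism} (or $X(P_{29} \otimes \id_{m-29}) = 0$ when $m \ge 29$), and expanding the Jones--Wenzl idempotent writes $X$ as a combination of diagrams containing a cap, which feeds the induction on the number of strands leaving the head (Lemmas~\ref{lem:Xs} and~\ref{lem:X}). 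With only the quadratic relation and the valency argument in hand, matching the terminal heads to the set $\{0,10,18,28\}$ is not a bookkeeping subtlety but impossible; the trick of multiplying by Jones--Wenzl idempotents to force strands between copies of $S$ is the missing idea.
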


Before we prove Theorem~\ref{thm:basis},
we establish some additional consequences of the relations
on $\mathcal{P}$.

\begin{lem}
\label{lem:jointwo}
The diagram
$$ \labellist
     \tiny \hair 1pt
     \pinlabel $S$ at 60 50
     \pinlabel $\star$ [b] at 60 70
     \pinlabel $S$ at 140 50
     \pinlabel $\star$ [b] at 140 70
     \endlabellist
  \raisebox{-18pt}{\includegraphics[scale=0.4]{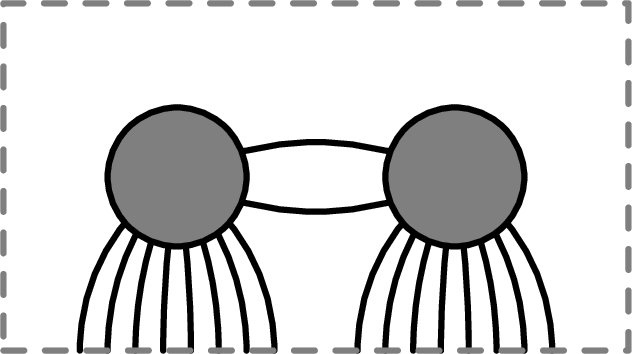}}$$
is a linear combination of diagrams
that have at most one copy of $S$.
\end{lem}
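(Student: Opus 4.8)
The plan is to reduce the number of copies of $S$ using the quadratic relation $S^2 = S + [2]^2[3]P_5$, which is the only defining relation that lowers the $S$-count. Applied to two copies of $S$ joined by five parallel strands, it produces one term with a single $S$ and one term involving the projection $P_5$ and no $S$ at all, which is exactly the bound claimed. So the whole game is to bring the given configuration into a linear combination of diagrams in which the two copies of $S$ meet along five strands in the standard position of $S^2$.

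First I would normalise each copy of $S$ separately. By the chirality relation $\rho(S)=\omega S$, any copy of $S$ may be freely rotated at the cost of a power of $\omega$, and by the partial trace relation $\tau(S)=0$ together with this rotation, any strand having both of its endpoints on the same copy of $S$ forces that term to vanish. After discarding such terms, every strand either runs between the two copies of $S$ or has an endpoint on the outer boundary.

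Next I would use the braiding relation of Lemma~\ref{lem:braid} to slide the strands that pass over the copies of $S$ into a convenient position, and then resolve any resulting crossings by the definition of the crossing, expanding into a linear combination of crossingless diagrams. Bubble bursting removes any closed loops that appear, each contributing a factor of $[2]$, and the normalisation of the previous paragraph again kills every term in which a strand closes back on a single copy of $S$. The point of this step is that, in every surviving term, the two copies of $S$ are joined by at least five strands, so that after one further rotation by chirality into the standard $S^2$ position the quadratic relation reduces that term to diagrams with at most one copy of $S$.

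The step I expect to be the main obstacle is precisely the bookkeeping of the previous paragraph: verifying that, once self-connections and loops are removed, a five-strand junction really is present in each remaining term so that the quadratic relation applies. This is the analogue, for two copies of $S$, of the counting used in Lemmas~\ref{lem:zero} and~\ref{lem:ten}, where the valency of a copy of $S$ forces an edge of multiplicity at least five; here I would expect to argue directly from the fact that each copy of $S$ has ten endpoints and to track how resolving the crossings distributes them between the two copies and the boundary.
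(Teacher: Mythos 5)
Your plan stalls at exactly the step you yourself flag as the main obstacle, and it stalls fatally. In the diagram of the lemma the two copies of $S$ are joined by only two strands, and the remaining sixteen legs run to the outer boundary. None of the moves you invoke --- chirality, partial trace, braiding, resolving crossings, bubble bursting --- can increase the number of strands joining the two copies: the diagram is already crossingless, with no closed loops and no self-connections, so after your normalisation you are left with the original picture, its two-strand junction intact, and the quadratic relation never applies. The counting you hope to imitate from Lemmas~\ref{lem:zero} and~\ref{lem:ten} is unavailable here: in those lemmas the boundary carries $0$ or $10$ endpoints, so the ten legs of each copy of $S$ have essentially nowhere to go except other copies of $S$, which forces an edge of multiplicity at least five; here the $16$ boundary points absorb eight legs of each copy of $S$, and no valency count forces anything.

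The missing idea is Lemma~\ref{lem:morphism}. The diagram, call it $\operatorname{Join}_2(S,S)$, lies in $\mathcal{P}^0_{16}$, and since $16 \not\in \{0,10,18,28\}$, Lemma~\ref{lem:morphism} gives $\operatorname{Join}_2(S,S)\,P_{16} = 0$. Now expand $P_{16}$ as the identity (with coefficient one) plus a linear combination of Temperley--Lieb diagrams each containing a cup; a cup attached to a single copy of $S$ kills its term (partial trace, after a rotation), while a cup joining the two copies produces a diagram containing $\operatorname{Join}_3(S,S)$. This trades the original diagram for a linear combination of diagrams containing $\operatorname{Join}_3(S,S)$, i.e.\ it is precisely the mechanism that \emph{creates} new connecting strands. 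Repeating the argument with $\operatorname{Join}_3(S,S)\,P_{14} = 0$ and then $\operatorname{Join}_4(S,S)\,P_{12} = 0$ (again $14, 12 \not\in \{0,10,18,28\}$) grows the junction to five strands, and only at that point does the quadratic relation finish the proof, exactly as in your intended endgame. So your final step is right, but the five-strand junction must be manufactured by composing with Jones--Wenzl projections via Lemma~\ref{lem:morphism}; it cannot be found inside the given diagram by any amount of diagrammatic simplification.
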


\begin{proof}
Let $\operatorname{Join}_2(S,S)$
denote the above diagram.
Consider $\operatorname{Join}_2(S,S) P_{16}$.
On the one hand,
this is zero by Lemma~\ref{lem:morphism}.
On the other hand,
we can write $P_{16}$ as a linear combination of Temperley-Lieb diagrams.
The identity diagram occurs with coefficient one.
Every other term contains a cup.
This cup either connects the two copies of $S$ or gives zero.
Thus $\operatorname{Join}_2(S,S)$
is equal to a linear combination of diagrams
that contain a copy of $\operatorname{Join}_3(S,S)$.

Now apply the same argument to $\operatorname{Join}_3(S,S) P_{14}$,
and then to $\operatorname{Join}_4(S,S) P_{12}$.
Thus we can work our way up to
a linear combination of diagrams
in which the two copies of $S$ are connected by five strands.
Now apply the quadratic relation
to obtain a linear combination of diagrams that have at most one copy of $S$.
\end{proof}

\begin{lem}
\label{lem:Xs}
If $m \in \{0,10,18,28\}$
then every morphism from $P_m$ to the empty diagram
is a scalar multiple of $XP_m$,
where $X$ is one of the four diagrams
shown in Figure~\ref{fig:Xs}.
\end{lem}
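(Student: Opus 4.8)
The plan is to deduce the statement from information already on the subfactor side, reducing everything to a single nonvanishing assertion. First I would note that $XP_m$ is automatically a morphism from $P_m$ to the empty diagram, since $P_m$ is idempotent; so the only real content is that the space $\mathrm{Hom}(P_m,\emptyset)$ of such morphisms is one-dimensional and that $XP_m$ is a nonzero element of it. One-dimensionality requires no new work: by Theorem~\ref{thm:iso} we may compute in $\mathcal{P}'$, and the decomposition of $P_m$ into minimal idempotents used in the proof of Lemma~\ref{lem:dimension} shows that the empty diagram occurs in $P_m$ with multiplicity exactly one precisely when $m \in \{0,10,18,28\}$; that multiplicity is $\dim \mathrm{Hom}(P_m,\emptyset)$. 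Granting this, the whole lemma collapses to the claim that $XP_m \neq 0$: a one-dimensional space containing the nonzero vector $XP_m$ consists entirely of scalar multiples of it.

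To prove $XP_m \neq 0$ I would use the faithful, positive-definite trace carried by the subfactor planar algebra $\mathcal{P}'$ and transported to $\mathcal{P}$ through $\Phi$. Writing $Z^{*}$ for the adjoint of a diagram $Z$, faithfulness gives $XP_m \neq 0$ if and only if $\operatorname{tr}\!\big(P_m X^{*} X P_m\big) \neq 0$. The diagram $P_m X^{*} X P_m$ is a closed configuration of copies of $S$ joined by strands, and I would evaluate it by the relations of $\mathcal{P}$: any two copies of $S$ joined by two or more strands are simplified by Lemma~\ref{lem:jointwo}, self-connections are removed by the partial trace and chirality relations, and each resulting copy of $S^{2}$ is eliminated by the quadratic relation. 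Since $P_m$ annihilates every cup, all the Temperley-Lieb terms produced along the way that contain a cup die against the flanking $P_m$'s, and what remains is a scalar multiple of $P_m$. Checking that this scalar is nonzero then gives $XP_m \neq 0$, and the lemma follows.

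The hard part will be exactly this last nonvanishing check. Positivity of the trace guarantees that $\operatorname{tr}(P_m X^{*} X P_m)$ is a nonnegative real, but to conclude it is strictly positive I must actually carry out the finite reduction above and track the surviving coefficient; the bookkeeping is heaviest in the case $m = 28$, where $X$ involves the most copies of $S$. A purely diagrammatic alternative, in the spirit of Lemmas~\ref{lem:zero} and \ref{lem:ten}, is to reduce an arbitrary morphism to a standard $S$-diagram by the braiding relation, Lemma~\ref{lem:jointwo}, and the quadratic relation; but here one must be careful, since at a given level there can be several distinct reduced $S$-diagrams (for instance at $m=28$), all necessarily proportional because $\mathrm{Hom}(P_m,\emptyset)$ is one-dimensional, so that the genuine issue again reduces to showing the specific representative $X$ of Figure~\ref{fig:Xs} is not zero.
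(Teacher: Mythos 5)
Your reduction is logically sound but has a genuine gap at exactly the point you flag as ``the hard part.'' Given one-dimensionality of the morphism space (which you correctly extract from the principal-graph decomposition in the proof of Lemma~\ref{lem:dimension}, via Theorem~\ref{thm:iso}), the claim $XP_m \neq 0$ is not a residual technicality: it is \emph{equivalent} to the lemma itself, since a one-dimensional space is spanned by $XP_m$ precisely when $XP_m$ is nonzero. So the proposal defers the entire content of the statement to an unperformed computation. Positivity of the trace only yields $\operatorname{tr}(P_m X^* X P_m) \ge 0$; to get strict positivity for $m = 18$ and $m = 28$ you must actually evaluate a closed diagram with four, respectively six, copies of $S$, using Lemma~\ref{lem:jointwo}, the chirality, partial trace and quadratic relations, and track that the surviving coefficient $c$ in $P_m X^* X P_m = c\,P_m$ is nonzero --- and nothing in the proposal rules out fortuitous cancellation. (The factor $\operatorname{tr}(P_m) = [m+1] \neq 0$ is fine; $c$ is the whole question.)

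The paper avoids this issue entirely by proving a weaker-looking statement directly: it takes an arbitrary diagram $D \in \mathcal{P}^0_m$ and shows $DP_m$ is a scalar multiple of $XP_m$, by induction on the number of copies of $S$. The braiding relation puts the copies of $S$ in a row; strands with both endpoints on one copy of $S$ or both on the bottom edge die (against the relations, respectively against $P_m$); Lemma~\ref{lem:jointwo} reduces any pair of copies of $S$ joined by two or more strands; and then one simply enumerates the surviving configurations, with the single extra configuration at $m = 28$ converted to the fourth diagram of Figure~\ref{fig:Xs} by an explicit braid, which against $P_{28}$ contributes only an (explicit, nonzero) scalar from Reidemeister moves. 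Crucially, this spanning argument never needs $XP_m \neq 0$; nonvanishing falls out only a posteriori, from the dimension count of Lemma~\ref{lem:dimension} in the proof of Theorem~\ref{thm:basis}. Your own ``purely diagrammatic alternative'' in the final paragraph is essentially this argument, and there the several reduced diagrams at $m = 28$ are proportional by explicit invertible braiding scalars, with no appeal to one-dimensionality needed. To repair your proof as written, you would have to carry out the trace computations for $m = 18, 28$; otherwise you should invert the logic as the paper does, proving spanning first and letting nonvanishing be a corollary rather than a hypothesis.
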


\begin{figure}[htb]
$$
     \includegraphics[scale=0.4]{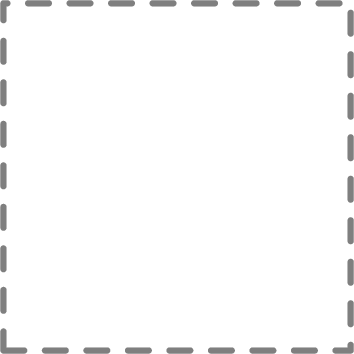},
\quad
     \labellist
     \tiny \hair 1pt
     \pinlabel $S$ at 50 50
     \pinlabel $\star$ [b] at 50 70
     \endlabellist
     \includegraphics[scale=0.4]{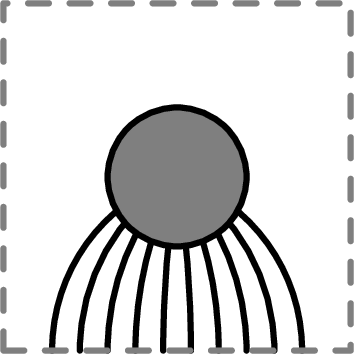},
\quad
     \labellist
     \tiny \hair 1pt
     \pinlabel $S$ at 60 50
     \pinlabel $S$ at 140 50
     \pinlabel $\star$ [b] at 60 70
     \pinlabel $\star$ [b] at 140 70
     \endlabellist
     \includegraphics[scale=0.4]{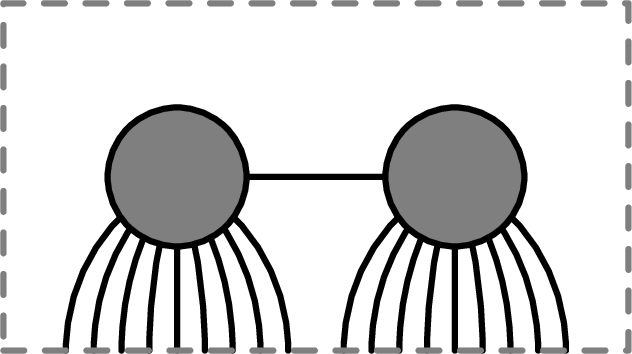},
\quad
     \labellist
     \tiny \hair 1pt
     \pinlabel $S$ at 50 50
     \pinlabel $S$ at 130 50
     \pinlabel $S$ at 210 50
     \pinlabel $\star$ [b] at 50 70
     \pinlabel $\star$ [b] at 130 70
     \pinlabel $\star$ [b] at 210 70
     \endlabellist
     \includegraphics[scale=0.4]{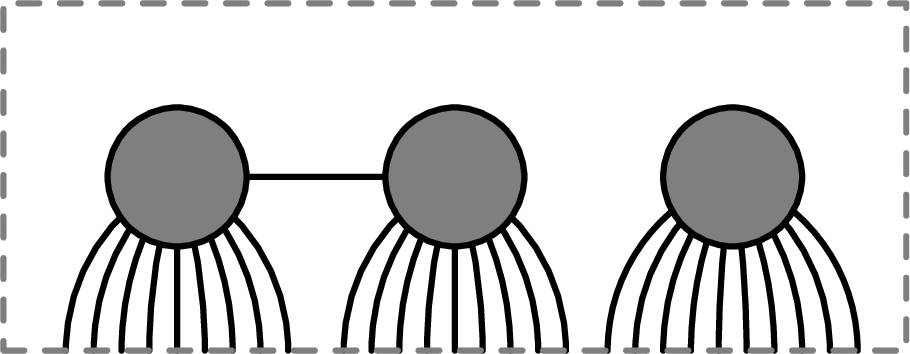}.
$$
\caption{Possible values for $X$}
\label{fig:Xs}
\end{figure}

\begin{proof}
Suppose $D$ is a diagram in $\mathcal{P}^0_m$.
We must show that $D P_m = X P_m$
where $X$ is one of the four diagrams shown in Figure~\ref{fig:Xs}.

We use induction on the number of copies of $S$ in $D$.
As in the proof of Lemma~\ref{lem:ten},
we can assume the copies of $S$
lie in a row at the top of $D$,
and all strands of $D$
lie entirely below the height of the tops of the copies of $S$.
If there is a strand with both endpoints on the same copy of $S$
then $D = 0$.
If there is a strand with both endpoints on the bottom edge of $D$
then $D P_m = 0$.
Thus we can assume every strand
either connects two copies of $S$,
or connects a copy of $S$ to the bottom edge of $D$.

By Lemma~\ref{lem:jointwo},
we can assume that any pair of copies of $S$
is connected by at most one strand.
If $m \in \{0,10,18\}$
then the only possible values of $D$
are as shown in Figure~\ref{fig:Xs}.
If $m = 28$ then there is one other possibility,
namely
that $D$ has three copies of $S$,
but the second and third copy are connected by a strand,
instead of the first and second.
Using the braiding relation,
we can bring the leftmost copy of $S$ to the right.
Thus $D = D' \beta$
where $D'$ is the fourth diagram in Figure~\ref{fig:Xs},
and $\beta$ is a braid.
Then $D' \beta P_{28}$
is a scalar multiple of $D' P_{28}$.
\end{proof}

\begin{lem}
\label{lem:X}
Any diagram $X \in \mathcal{P}^0_n$
is a linear combination of diagrams
that either contain a cap
or are one of the four diagrams in Figure~\ref{fig:Xs}.
\end{lem}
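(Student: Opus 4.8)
The plan is to reduce an arbitrary diagram $X \in \mathcal{P}^0_n$ to a normal form using the relations we have already established, mirroring the structure of the proofs of Lemma~\ref{lem:ten} and Lemma~\ref{lem:Xs} but now allowing caps to appear. First I would use the braiding relation to line up all copies of $S$ in a row at the top of $X$, with every strand lying below the tops of the copies of $S$, exactly as in those earlier lemmas. I would then proceed by induction on the number of copies of $S$. If any strand has both endpoints on the same copy of $S$, the partial trace and chirality relations let me eliminate it or reduce the count of $S$'s; if any pair of copies of $S$ is joined by multiple strands, I can bring that number up to five and apply the quadratic relation, again reducing the count. Lemma~\ref{lem:jointwo} is the key tool here: it guarantees that I can assume any two copies of $S$ are connected by at most one strand.

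The main structural difference from Lemma~\ref{lem:Xs} is that there we were multiplying by the Jones--Wenzl idempotent $P_m$, which annihilated any diagram containing a cup (equivalently a cap on the dual side) and forced the boundary behaviour. Here we have no such projection, so strands with both endpoints on the bottom edge of $X$ are exactly the caps we are permitted to have in the conclusion. So the next step is: whenever $X$ contains a strand with both endpoints on the bottom edge, that strand \emph{is} a cap, and such a diagram is already on the allowed list. Thus I may assume $X$ contains no cap, which means every strand either connects two distinct copies of $S$ or runs from a copy of $S$ down to the bottom edge without doubling back.

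Having made these reductions, the combinatorics becomes identical to the analysis in Lemma~\ref{lem:Xs}: each copy of $S$ emits ten strand-ends, and with no self-connections, no multiple connections between a given pair, and no caps, the only configurations that survive are those in which the copies of $S$ form a short chain, each adjacent pair joined by a single strand, with the remaining ends going to the boundary. Counting ends (each $S$ contributes $10$) shows that only zero, one, two, or three copies of $S$ can occur, and in each case the diagram is forced to be one of the four pictures in Figure~\ref{fig:Xs}, possibly after using the braiding relation to reorder the copies of $S$ as in the $m=28$ case of Lemma~\ref{lem:Xs}. The induction closes because every reduction step strictly decreases the number of copies of $S$.

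I expect the main obstacle to be bookkeeping rather than any genuinely new idea: I must verify that the boundary-valence constraints really do pin the surviving configurations down to exactly the four listed diagrams, and in particular that the reordering braids introduced when handling a chain of three copies of $S$ can be absorbed (they act on the diagram by scalars or by further applications of the braiding relation, as in the proof of Lemma~\ref{lem:Xs}). The delicate point is that, unlike in Lemma~\ref{lem:Xs}, I cannot simply quotient out cup-containing terms by multiplying by $P_m$; instead I must carry the cap-containing terms along explicitly as a permitted part of the linear combination. As long as I am careful to classify each reduction step as either producing a cap (allowed and set aside), reducing the number of copies of $S$ (handled by induction), or leaving one of the four Figure~\ref{fig:Xs} diagrams, the argument goes through.
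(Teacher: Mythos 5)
There is a genuine gap at the final classification step. Your reductions (braid the copies of $S$ into a row, kill self-connections by chirality and partial trace, use Lemma~\ref{lem:jointwo} and the quadratic relation to reduce any pair of copies joined by two or more strands, set aside any diagram containing a cap) leave you with cap-free diagrams in which the copies of $S$ sit in a row with at most one strand between any pair and all other ends running to the bottom edge. But nothing in this toolkit bounds the number of copies of $S$: for arbitrary $n$ such configurations exist with any number of copies. Concretely, two copies of $S$ with no connecting strand ($n=20$), a chain of three copies with two single connecting strands ($n=26$), a chain of four ($n=34$), or $k$ pairwise disconnected copies ($n=10k$) all survive every reduction you list, contain no cap, and are not among the four diagrams of Figure~\ref{fig:Xs}. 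Your ``counting ends'' claim that only zero, one, two, or three copies of $S$ can occur has no justification and is false as a statement about diagrams; it is only true \emph{modulo further relations}, and establishing that requires input your argument never uses. Note that the combinatorics in Lemma~\ref{lem:Xs} is forced only because $m$ there is restricted to $\{0,10,18,28\}$ and multiplication by $P_m$ kills bottom-to-bottom strands; neither constraint is available for general $n$.

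The paper's proof avoids the diagrammatic analysis entirely and instead projects: for $n\ge 29$ one multiplies $X$ by $P_{29}\otimes\id_{n-29}$, which vanishes by Lemma~\ref{lem:jw}, and expanding the idempotent as the identity plus cap-containing Temperley--Lieb terms rewrites $X$ as a combination of diagrams with caps; for $n\le 29$ with $n\notin\{0,10,18,28\}$ one uses $XP_n=0$ from Lemma~\ref{lem:morphism} in the same way; and for $n\in\{0,10,18,28\}$ one uses Lemma~\ref{lem:Xs} to identify $XP_n$ with a multiple of $DP_n$ for one of the four diagrams $D$, so $X$ equals a multiple of $D$ modulo cap terms. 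The essential content killing your leftover configurations is exactly Lemma~\ref{lem:morphism} (the rotational eigenvalue computation showing morphisms from $P_n$ to the empty diagram vanish for $n\notin\{0,10,18,28\}$) together with $P_{29}=0$; your proposal invokes neither, and without them the disconnected and long-chain configurations cannot be removed. To repair your argument you would have to absorb the Jones--Wenzl projection step, at which point it becomes the paper's proof.
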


\begin{proof}
Suppose $n \ge 29$.
Let $P_{29} \otimes \id_{n-29}$
denote the Jones-Wenzl idempotent
with extra vertical strands if necessary
to bring the total up to $n$.
Consider $X(P_{29} \otimes \id_{n-29})$.
This is zero by Lemma~\ref{lem:jw}.
We can use this to write $X$
as a linear combination of diagrams that contain a cap.

Now suppose $n \le 29$
but $n \not\in \{0,10,18,28\}$.
Consider $XP_n$.
On the one hand,
this is zero by Lemma~\ref{lem:morphism}.
On the other hand,
we can write $P_n$ as a linear combination of Temperley-Lieb diagrams.
We can use this to write $X$
as a linear combination of diagrams that contain a cap.

Now suppose $n \in \{0,10,18,28\}$.
Consider $X P_n$.
On the one hand,
by Lemma~\ref{lem:Xs},
$X P_n$ is a scalar multiple of $D P_n$,
where $D$ is one of the four diagrams in Figure~\ref{fig:Xs}.
On the other hand,
we can write $P_n$ as a linear combination of Temperley-Lieb diagrams.
Thus $X$ is a scalar multiple of $D$,
modulo terms that contain a cap.
\end{proof}

\begin{lem}
\label{lem:Y}
Any Temperley-Lieb diagram $Y$ in $\mathcal{P}^m_n$
is a linear combination of diagrams
that either contain a cup
or are JW-reduced.
\end{lem}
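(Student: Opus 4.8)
The plan is to read this as a width‑reduction statement inside the Temperley--Lieb algebra, with the single engine being the relation $P_{29}=0$ supplied by Lemma~\ref{lem:jw}. If $Y$ already contains a cup there is nothing to prove, so I would assume $Y$ is cupless; then every one of its $m$ top endpoints is the end of a through strand, and the remaining strands are caps on the bottom edge. If $Y$ is JW-reduced I am finished, so I would assume that for some cut point $x_i$ the number of strands counted in the definition of JW-reduced is at least $29$. The goal is to rewrite such a $Y$ as a linear combination of diagrams that either contain a cup or are cupless of strictly smaller complexity, and then to induct.

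For the reduction move I would first put $Y$ in a taut position and choose a simple arc $\gamma$ running from $x_i$ on the bottom edge to the top edge, crossing each of the $\ge 29$ counted strands exactly once and crossing no other strand. By planarity these strands meet $\gamma$ in a definite order, so inside a thin collar of $\gamma$ they appear as a cable of parallel strands; I would pick $29$ consecutive ones and insert $P_{29}$ into a small box cutting across this cable. Since $P_{29}=0$, expanding $P_{29}$ in the Temperley--Lieb basis expresses $\id_{29}$ as a linear combination of diagrams each containing a turnback, and hence writes $Y$ as a linear combination of diagrams in each of which two adjacent strands of the cable have been cut and reconnected on the two sides of $\gamma$. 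This is the exact analogue, performed in the interior rather than on the boundary, of the trick used with $P_{29}\otimes\id$ in Lemma~\ref{lem:X}.

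I would then analyse each resulting diagram by what the turnback does. The portions of the two reconnected strands lying on the side of $\gamma$ toward the bottom left are joined into a short cap, while the portions on the other side are joined into an arc whose endpoints are the two original ``far'' endpoints; if both of those lie on the top edge this arc is a cup, and that diagram is already in the allowed set. In every case the two reconnected strands no longer cross $\gamma$, so the cut at $x_i$ now meets at least two fewer strands. The remaining task is to package this into a terminating induction: I would fix a complexity measure — the total crossing number $\sum_i c_i(Y)$ summed over all cuts is the natural candidate — and show that, for the cupless terms, the surgery strictly decreases it.

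The main obstacle is precisely this last piece of bookkeeping. Removing two strands from the cut at $x_i$ lowers $c_i$, but the newly created cap (and, in the mixed case where one far endpoint is on the bottom, a new through strand) could a priori raise the count at some other cut, so I must check that the chosen complexity genuinely drops. I expect that keeping $Y$ in a length‑minimizing representative throughout, together with the planarity constraint that the new arcs can be drawn in an arbitrarily small neighborhood of $\gamma$ and of the two bottom endpoints, is what controls the crossing numbers at the other cuts and forces the measure down; making this estimate precise, rather than the insertion of $P_{29}$ itself, is where the real work lies.
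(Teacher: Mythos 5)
Your core move is exactly the paper's: route an arc from the overloaded cut point to the top edge so that it meets precisely the counted strands, insert $P_{29}$ across it, use $P_{29}=0$ (Lemma~\ref{lem:jw}) to expand $Y$ into insertion terms with turnbacks, and induct. But the proof as proposed is incomplete at precisely the point you flag yourself: you never exhibit a complexity measure that provably decreases, and your fallback discussion (taut representatives, controlling crossing numbers via length-minimizing position) aims at the wrong target. The quantities in the definition of JW-reduced are determined purely by which boundary points each strand joins, so the sequence attached to a diagram is an isotopy invariant; no geometric normalization is relevant, and what must be checked is the combinatorial effect of the reconnection on that sequence.

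The paper closes this gap with two simplifications and a cleaner measure. First, since the sequence $(a_0,\dots,a_n)$ of counts starts at $a_0=0$ and satisfies $a_{i+1}=a_i\pm 1$, if some entry is at least $29$ then some entry equals exactly $29$; choosing that cut eliminates your ``pick $29$ consecutive strands out of more'' maneuver. Second, the line $L$ is positioned with all top endpoints of $Y$ to its right, so every strand crossing $L$ has its left endpoint on the bottom at or to the left of $x_i$ and its other endpoint to the right or on top. Then, instead of your total $\sum_j a_j$, the paper orders the sequences lexicographically: every non-identity Temperley--Lieb term inserted into $L$ has a turnback on each side; a left turnback joins two bottom-left endpoints, which drops $a_i$ by two and can only lower or preserve $a_j$ for $j<i$ (a short check using the normalization), while whatever the right turnbacks and through strands do to $a_j$ for $j>i$ is irrelevant to lexicographic order, and the terms in which a right turnback joins two top endpoints contain a cup and are discarded outright. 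Incidentally, your instinct that the total sum also works is correct --- a case check of through strands, left turnbacks and right turnbacks against the indicators ``left endpoint at or left of $x_j$'' and ``other endpoint right of $x_j$ or on top'' shows every $a_j$ weakly decreases, so your feared increase at other cuts never occurs --- but that check is exactly the work you deferred, and the lexicographic order is what makes it nearly automatic.
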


\begin{proof}
Let $x_1,\dots,x_n$
be the endpoints at the bottom of $Y$,
in order from left to right.
Let $a_i$ be the number of strands
that have one endpoint at or to the left of $x_i$,
and the other endpoint to the right of $x_i$ or at the top of $Y$.
Let $a_0 = 0$.
Call $(a_0,\dots,a_n)$ the {\em sequence corresponding to $Y$}.
This sequence satisfies $a_{i+1} = a_i \pm 1$.

Suppose $a_i \ge 29$ for some $i$.
Then $a_i = 29$ for some $i$.

Let $L$ be a vertical line
between $x_i$ and $x_{i+1}$.
Assume
all the endpoints at the top of $Y$
are to the right of $L$,
and $L$ intersects the strands of $Y$ in exactly $29$ places.

Let $Y'$ be the result of inserting
a sideways copy of $P_{29}$ into $L$.
This is zero by Lemma~\ref{lem:jw}.
We can use this to write $Y$
as a linear combination of diagrams
obtained by inserting non-identity Temperley-Lieb diagrams into $L$.
Each diagram in this linear combination
either contains a cup,
or has a corresponding sequence
that is smaller than $(a_0,\dots,a_n)$
in lexicographic order.

This process must terminate
with a linear combination of Temperley-Lieb diagrams
of the desired form.
\end{proof}

\begin{lem}
$\mathcal{B}_n$ spans $\mathcal{P}_n$.
\end{lem}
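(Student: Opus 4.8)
The plan is to show that every diagram $D \in \mathcal{P}_n$ lies in the span of $\mathcal{B}_n$; since diagrams span $\mathcal{P}_n$, this suffices. I would argue by induction on the pair $(k,s)$, ordered lexicographically, where $k$ is the number of copies of $S$ in $D$ and $s$ is the number of strands. In the base case $k=0$ the diagram $D$ is a Temperley-Lieb diagram lying in $\mathcal{P}^0_n = \TL^0_n$, so it has no endpoints on its top edge and hence contains no cup; Lemma~\ref{lem:Y} then writes $D$ as a linear combination of JW-reduced diagrams, each of which is of the form $XY$ with $X$ the empty diagram, that is, a member of $\mathcal{B}_n$.

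For the inductive step I would first use the braiding relation, exactly as in the proofs of Lemmas~\ref{lem:ten} and~\ref{lem:Xs}, to arrange the copies of $S$ in a row along the top and bring $D$ to the form $D = X'Y'$, where $X' \in \mathcal{P}^0_m$ carries all $k$ copies of $S$ and $Y' \in \TL^m_n$ is Temperley-Lieb. Any crossings produced along the way are resolved and any closed loops deleted by bubble bursting, and neither step raises $k$. Applying Lemma~\ref{lem:X} to $X'$ and Lemma~\ref{lem:Y} to $Y'$ and expanding, $D$ becomes a linear combination of products $PQ$ in which $P$ either contains a cap or is one of the four diagrams of Figure~\ref{fig:Xs} (the special case occurring only when $m \in \{0,10,18,28\}$, in which case $P$ and $Q$ share the same value of $m$), and $Q$ either contains a cup or is JW-reduced. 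The terms in which $P$ is one of the four special diagrams and $Q$ is JW-reduced are precisely elements of $\mathcal{B}_n$, and are kept.

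It remains to dispose of the degenerate terms by the inductive hypothesis. If $P$ contains a cap, that cap is a pure Temperley-Lieb arc joining two adjacent points where $P$ meets $Q$; absorbing it into $Q$ merges two strands of $Q$ into one (or, against a cup of $Q$, closes a loop that is removed by bubble bursting), producing a diagram with the same $k$ but strictly fewer strands, to which the secondary induction applies. If instead $P$ is a special diagram and $Q$ contains a cup, that cup caps two adjacent legs of the copies of $S$ in $P$: when the two legs belong to a single $S$ it is, after a chirality rotation, a partial trace and hence zero, and when they belong to two different copies of $S$ it creates a second strand between them, so that Lemma~\ref{lem:jointwo} reduces the number of copies of $S$. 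Either way we obtain diagrams with $k$ strictly smaller, handled by the primary induction, and the double induction closes.

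The main obstacle I anticipate is precisely this last bookkeeping: confirming that a cup abutting one of the four special diagrams always triggers either the partial-trace relation or a double bond to which Lemma~\ref{lem:jointwo} applies, and never merely rearranges strands while leaving $k$ unchanged; and checking that the initial separation and crossing-resolution genuinely respect the lexicographic measure, in the sense that they do not increase $k$ and that the resulting cut yields an honest factorization $X'Y'$ with $Y'$ Temperley-Lieb. Once these points are verified, every term is either a basis diagram or is strictly smaller in $(k,s)$, completing the argument.
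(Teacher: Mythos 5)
Your skeleton matches the paper's proof exactly — normalize the copies of $S$ into a row via the braiding relation as in Lemma~\ref{lem:ten}, factor $D = XY$ across the interface, apply Lemma~\ref{lem:X} to $X$ and Lemma~\ref{lem:Y} to $Y$, and recurse on the degenerate terms — but your induction measure is broken, and the worry you flag at the end is exactly where it fails. A crossing-free diagram in $\mathcal{P}^0_n$ with $k$ copies of $S$ and no closed loops has exactly $(n+10k)/2$ strands, since each strand has two endpoints among the $n$ boundary points and the $10k$ legs of the copies of $S$. So your secondary coordinate $s$ is a function of $(k,n)$ alone and can never strictly decrease while $k$ is fixed: ``absorbing a cap into $Q$'' is just an isotopy of the composite picture, and the resulting honest diagram has the same strand count as $D$. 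Thus in precisely the cases you delegate to the secondary induction — the cap-containing terms produced by Lemma~\ref{lem:X}, and cup terms that leave $k$ unchanged — your lexicographic ordinal $(k,s)$ does not decrease, and the induction does not close. The paper instead inducts on $m$, the number of strands connecting copies of $S$ to the bottom edge, i.e.\ the width of the cut between $X$ and $Y$: absorbing a cap of $X'$ or a cup of $Y'$ lowers this interface by two, which is genuine progress, with the base case $m=0$ handled by Lemma~\ref{lem:zero} and Lemma~\ref{lem:Y}.

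There is also a concrete error in your case analysis of a cup abutting the special diagrams, and it is not merely cosmetic because it produces exactly the bad case above. In the fourth diagram of Figure~\ref{fig:Xs} ($m = 28$, three copies of $S$), the three copies have $30$ legs of which $28$ go to the bottom, so only one adjacent pair of copies is joined by a strand. A cup placed under the \emph{unjoined} adjacent pair creates the \emph{first} strand between those two copies, not a second, so Lemma~\ref{lem:jointwo} does not apply and $k$ is unchanged — and by the strand-count identity, $s$ is unchanged too, so neither coordinate of your measure moves. In the paper's accounting this term is unproblematic: the interface drops from $28$ to $26$, and since $26 \notin \{0,10,18,28\}$ the next pass through Lemma~\ref{lem:X} produces only cap terms, which drop $m$ again. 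The fix is simply to discard $(k,s)$ and induct on $m$ as the paper does (with the normalization of Lemma~\ref{lem:ten}, which carries its own induction on $k$, treated as a subroutine).
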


\begin{proof}
Suppose $D$ is a diagram in $\mathcal{P}_n$
Draw $D$ in a rectangle,
with all endpoints on the bottom edge.

As in the proof of Lemma~\ref{lem:ten},
we can assume the copies of $S$
lie in a row at the top of $D$,
and every strand either connects adjacent copies of $S$
or connects a copy of $S$ to the bottom edge of $D$.
Let $m$ be the number of strands
that connect a copy of $S$ to the bottom edge of $D$.
We proceed by induction on $m$.

Write $D$ in the form $XY$,
where $X$ is a diagram in $\mathcal{P}^0_m$
and $Y$ is a Temperley-Lieb diagram in $\mathcal{P}^m_n$.
Now apply Lemma~\ref{lem:X} to $X$
and Lemma~\ref{lem:Y} to $Y$.
Thus $XY$ is a linear combination of diagrams of the form $X'Y'$,
where $X$ either contains a cap
or is one of the four diagrams in Figure~\ref{fig:Xs},
and $Y$ either contains a cup
or is JW-reduced.

If $X'$ contains a cap
or $Y'$ contains a cup
then $X'Y'$ has a smaller value of $m$.
If $X'$ does not contain a cap
and $Y'$ does not contain a cup
then $X'Y'$ is an element of $\mathcal{B}_n$.
\end{proof}

By Lemma~\ref{lem:dimension},
the dimension of $\mathcal{P}_n$
is the number of elements of $\mathcal{B}_n$.
This completes the proof the $\mathcal{B}_n$
is a basis for $\mathcal{P}_n$.

\section{The other $ADE$ planar algebras}
\label{sec:ade}

We now consider the subfactor planar algebras
of types $A_N$, $D_{2N}$ and $E_6$.
Each of these has a presentation and a basis
similar to those we gave for $E_8$,
and by similar arguments.

The case of $A_N$ is already well understood.
In the case of $D_{2N}$,
a presentation and a basis were given in \cite{mps}.
However our methods give a different point of view.
The basis elements in \cite{mps}
are complicated linear combinations of diagrams
built out of minimal idempotents,
whereas each of our basis elements is a single diagram.
This is not to say our basis is better.
Indeed,
the apparently more complicated basis
is actually more natural from a purely algebraic point of view.

Our presentations for the planar algebras are as follows.

The subfactor planar algebra with principal graph $A_N$
is the planar algebra with no generators
and the defining relations
\begin{itemize}
\item $\raisebox{-6pt}{\includegraphics[scale=0.4]{manfig1}} = [2]  \,
       \raisebox{-6pt}{\includegraphics[scale=0.4]{manfig2}}$.
\item $P_N = 0$,
\end{itemize}
where $q = e^{i \pi / (N+1)}$.

The subfactor planar algebra with principal graph $D_{2N}$
is the planar algebra $\mathcal{P}$
with a single generator $S \in \mathcal{P}_{4N-4}$
and defining relations
\begin{itemize}
\item $\raisebox{-6pt}{\includegraphics[scale=0.4]{manfig1}} = [2]  \,
       \raisebox{-6pt}{\includegraphics[scale=0.4]{manfig2}}$.
\item $\rho(S) = \sqrt{-1} S$.
\item $\tau(S) = 0$.
\item $S^2 = P_{2N-2}$.
\item $P_{4N-3} = 0$,
\end{itemize}
where $q = e^{i \pi / (4N-2)}$.
Note that we cannot use a defining relation $\hat{S} P_{4N-2} = 0$,
since $P_{4N-2}$ is not defined for the above value of $q$.

The subfactor planar algebra with principal graph $E_6$
is the planar algebra $\mathcal{P}$
with a single generator $S \in \mathcal{P}_6$
and the defining relations
\begin{itemize}
\item $\raisebox{-6pt}{\includegraphics[scale=0.4]{manfig1}} = [2]  \,
       \raisebox{-6pt}{\includegraphics[scale=0.4]{manfig2}}$,
\item $\rho(S) = e^{4 i \pi / 3} S$,
\item $\tau(S) = 0$,
\item $S^2 = S + [2]^2[3] P_3$,
\item $\hat{S} \, P_8 = 0$,
\end{itemize}
where $q = e^{i \pi/12}$.

Each of these planar algebras satisfies some kind of ``braiding relation''.
In the $A_N$ case,
Reidemeister moves two and three say
you can drag a strand over or under any part of a diagram.
In the $D_{2N}$ case,
\cite{mps} prove you can drag a strand over any part of a diagram,
and you can drag a strand under any part of the diagram
up to a possible change of sign.
In the $E_6$ and $E_8$ cases,
you can drag a strand over any part of a diagram,
but you cannot drag a strand under a generator,
even up to sign.

In the definition of JW-reduced,
we must replace the number $28$
with the number $k$ such that $P_{k+1} = 0$.
In the $E_6$ case,
$P_{11} = 0$.

Each of these planar algebras
has a basis similar to the one defined in Section~\ref{sec:basis}.
The basis elements are of the form $XY$,
where $X$ is one of a short list of possibilities,
and $Y$ is a JW-reduced Temperley-Lieb diagram.
In the case of $A_N$,
$X$ is simply the empty diagram.
In the cases $D_{2N}$ and $E_6$,
$X$ is either the empty diagram
or $S$,
with all strands pointing down.


\newcommand{\etalchar}[1]{$^{#1}$}
\providecommand{\bysame}{\leavevmode\hbox to3em{\hrulefill}\thinspace}
\providecommand{\MR}{\relax\ifhmode\unskip\space\fi MR }
\providecommand{\MRhref}[2]{%
  \href{http://www.ams.org/mathscinet-getitem?mr=#1}{#2}
}
\providecommand{\href}[2]{#2}


\begin{thebibliography}{FYH{\etalchar{+}}85}

\bibitem[FK97]{frenkhov}
Igor B.~Frenkel, Mikhail G.~Khovanov,
\emph{Canonical bases in tensor products
      and graphical calculus for $U_q(sl_2)$},
Duke Math. J. \textbf{87} (1997), no.~3, 409--480. 

\bibitem[GHJ89]{ghj}
Frederick M Goodman, Pierre de la Harpe, Vaughan F.~ R.~Jones,
\emph{Coxeter graphs and towers of algebras},
volume 14 of
\emph{Mathematical Sciences Research Institute Publications},
Springer-Verlag, New York (1989).

\bibitem[Izu91]{izumi}
Masaki Izumi,
\emph{Application of fusion rules to classification of subfactors},
Publ. Res. Inst. Math. Sci. \textbf{27} (1991), 953--994.

\bibitem[Jon03]{jonesannular}
Vaughan F.~R. Jones,
\emph{The annular structure of subfactors},
from: ``Essays on geometry and related topics, Vol.~1,~2'',
Monogr. Enseign. Math., \textbf{38}, Enseignement Math., Geneva, (2001)
401--463. 

\bibitem[Jon99]{jonesplanar}
Vaughan F.~R. Jones,
\emph{Planar algebras, I} (1999),
{\tt arxiv.org:math/9909027v1 [math.QA] }.

\bibitem[Jon03]{quadtangle}
Vaughan F.~R. Jones,
\emph{Quadratic tangles in planar algebras} (2003),
``pre-pre-print'' available at
{\tt http://math.berkeley.edu/~vfr}

\bibitem[KO02]{ko}
Alexander Kirillov, Jr, Viktor Ostrik,
\emph{On a $q$-analogue of the {M}cKay correspondence
      and the {ADE} classification of $sl_2$ conformal field theories},
Adv. Math. \textbf{171} (2002), 183--227.

\bibitem[Kup96]{kuperberg}
Greg Kuperberg,
\emph{Spiders for rank $2$ {L}ie algebras},
Comm. Math. Phys. \textbf{180} (1996), 109--151.

\bibitem[MPS08]{mps}
Morrison, Peters and Snyder,
\emph{Skein theory for the $D_{2n}$ planar algebras} (2008),
{\tt arxiv.org:0808.0764v2 [math.QA] }.

\bibitem[Ocn88]{ocneanu}
Adrian Ocneanu,
\emph{Quantized groups, string algebras and {G}alois theory for algebras},
from: ``Operator algebras and applications, Vol.~2'',
London Math. Soc. Lecture Note Ser., 136,
Cambridge Univ. Press, Cambridge, (1988) 119--172.

\end{thebibliography}
\end{document}